%%%%%%%%%%%%%%%%%%%%%%%%%%%%%%%%%%%%%%%%%%%%%%%%%%%%%%%%%%%%%%%
%                                                             %
%      Choosability of the square of a planar                 %
%          graph with maximum degree four                     %
%                                                             %
%      D.W. Cranston, R. Erman, R. \v{S}rekovski              %
%                                                             %
%%%%%%%%%%%%%%%%%%%%%%%%%%%%%%%%%%%%%%%%%%%%%%%%%%%%%%%%%%%%%%%
%\documentclass[12pt, a4paper]{article}
\documentclass[12pt]{article}

\usepackage{latexsym, graphicx}
\usepackage{amsmath, amssymb, amsthm}
\usepackage{enumerate}
\usepackage{url}
\usepackage[margin=1.5cm]{caption}
%\usepackage{hyperref}
%\usepackage{refcheck}

%% Theorems %%

\newtheorem{theorem}{Theorem}[section]

\newtheorem{lem}[theorem]{Lemma}

\newtheorem{conjecture}[theorem]{Conjecture}

\theoremstyle{definition}

%% Including graphics
%%\def\myincludegraphics#1{\begin{center}\includegraphics{img/#1}\\{$img/#1$.pdf}\end{center}}
%\def\myincludegraphics#1{\begin{center}\includegraphics{img/#1}\end{center}}

\DeclareMathOperator{\mad}{\rm{mad}}

%% Paper size %%

%\textwidth 16.0cm
%\textheight 25.0cm
%\oddsidemargin 0.4cm
%\evensidemargin 0.4cm
%\voffset -2.3cm
\usepackage{fullpage}

%% Todo macro %%

\makeatletter \newcommand{\listoftodos}{\section*{Todo list} \@starttoc{tdo}}
\newcommand\l@todo[2]
    {\par\noindent \textit{#2}, \parbox{10cm}{#1}\par} \makeatother

%% Shortcuts %%

%% Begin document %%

\begin{document}

\title{Choosability of the square of a planar graph\\ with maximum degree four }

\author{
Daniel W. Cranston\thanks{Virginia Commonwealth University, Department of
Mathematics and Applied Mathematics, Richmond, VA, USA. Email: {\tt
dcranston@vcu.edu}}
\and
Rok Erman\thanks{Institute of Mathematics, Physics and Mechanics, %\\
Jadranska 19, 1000 Ljubljana, Slovenia. %\\
Email: {\tt rok.erman@gmail.com}
} \and
Riste \v Skrekovski\thanks{Department of Mathematics, University of Ljubljana, %\\
Jadranska 21, 1000 Ljubljana, Slovenia. %\\
Email: {\tt skrekovski@gmail.com}
}
}

\date{\today}

\maketitle

\noindent
{\bf Keywords:} choosability; square of a graph;
maximum average degree; discharging; girth; maximum degree; list-colouring \\

%%%%%%%%%%%%%%%%%%%%%%%%%%%%%%%%%%%%%%%%%%%%%%%%%%%%%%%%%%%%%%%%%%%%%5

\begin{abstract}
We study squares of planar graphs with the aim to determine their list
chromatic number.  We present new upper bounds for the square of a
planar graph with maximum degree $\Delta \leq 4$. In particular $G^2$ is 5-,
6-, 7-, 8-, 12-, 14-choosable if the girth of $G$ is at least 16, 11,
9, 7, 5, 3 respectively.
In fact we prove more general results, in terms of maximum average degree, that
imply the results above.
 %Wang~\cite{raspaud}.
\end{abstract}

\section{Introduction}

The square of a graph $G$, denoted by $G^2$, is the graph with $V(G^2)=V(G)$ and 
$E(G^2)=\{uv\mid  d_G(u,v)\leq 2\}$. This means that two vertices are adjacent 
in $G^2$ if they are at distance at most two in $G$.
If $\Delta$ is the maximum degree of $G$, then to colour its square $G^2$ we
will need at least $\Delta+1$ colours while the upper bound is $\Delta^2+1$
using the greedy algorithm.  This upper bound is also achieved for a few
graphs, for example by the Petersen graph.

Regarding the colouring of the square of planar graphs, Wegner~\cite{wegner}
posed the following conjecture in 1977:

\begin{conjecture}[Wegner]
\label{wegner}
For a planar graph $G$ of maximum degree $\Delta$:
$$\chi(G^2)\leq \left\{  \begin{array}{ll}
         7, & \mbox{$\Delta=3$};\\
         \Delta+5, & \mbox{$4\leq \Delta \leq 7$};\\
         \lceil\frac{3}{2}\Delta \rceil+1, & \mbox{$\Delta \geq 8$}.
         \end{array} \right.$$
\end{conjecture}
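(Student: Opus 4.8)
The plan is to treat each of the three ranges of $\Delta$ separately by the discharging method applied to a minimal counterexample, and to run the induction in the stronger list-colouring setting so that deleting vertices leaves room to extend the colouring. Fix the range in question, let $k=k(\Delta)$ be the bound Wegner asserts for it, and suppose $G$ is a plane graph with $\chi(G^2)>k$ chosen with $|V(G)|$ minimum. Minimality immediately forces structure. For instance $G$ is connected, and it has no vertex $v$ of degree $1$: the unique neighbour of $v$ has degree at most $\Delta$, so $v$ has at most $\Delta\le k-1$ neighbours in $G^2$, and after colouring $G-v$ by minimality we may colour $v$ last. Sharpening these arguments rules out further low-degree configurations and pins down the minimum degree and local connectivity of $G$.

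The heart of the proof is a catalogue of reducible configurations: local pictures that cannot appear in $G$, because their presence would let us colour a smaller square by minimality and then greedily fill in the deleted vertices. The governing estimate is
\[
\deg_{G^2}(v)\;\le\;\sum_{u\sim v}\deg(u),
\]
since $v$ sees only its $\deg(v)$ neighbours and their $\deg(u)-1$ further neighbours; hence $v$ may be coloured last whenever $\sum_{u\sim v}\deg(u)\le k-1$. Any configuration in which a low-degree vertex is surrounded only by low-degree vertices is therefore reducible, and assembling a list rich enough to exclude every such sparse neighbourhood is the first substantial task. In the middle range $4\le\Delta\le 7$ one must push further, exploiting the planarity of the relevant subgraphs of $G^2$ and parity to shave the count down to the additive target $\Delta+4$.

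With a suitable catalogue in hand, I would give each vertex the charge $\mu(v)=\deg(v)-6$ and each face the charge $\mu(f)=2\deg(f)-6$, so that Euler's formula yields the fixed deficit
\[
\sum_{v}\mu(v)+\sum_{f}\mu(f)=-12.
\]
Discharging rules then move charge from high-degree vertices and large faces toward the low-degree vertices and triangular faces that carry negative initial charge, and the reducibility lemmas are exactly what guarantee that after discharging every vertex and every face has non-negative charge, contradicting the negative total. The rules must be recalibrated for each range, since the target shifts from the additive shape $\Delta+5$ to the multiplicative shape $\tfrac{3}{2}\Delta$, and the bulk of the case analysis lives in verifying non-negativity vertex by vertex and face by face.

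The hard part will be the range $\Delta\ge 8$, where the bound $\lceil\tfrac{3}{2}\Delta\rceil+1$ is attained by explicit extremal graphs and so leaves no slack at all. There the estimate $\sum_{u\sim v}\deg(u)$ can be as large as $\Delta^2$, far above $\tfrac{3}{2}\Delta$, so no configuration is reducible on local counting grounds alone. What is really needed is a global sparsity statement---that a planar graph cannot pack many high-degree vertices into mutually overlapping second neighbourhoods---and coaxing such a statement out of the discharging, rather than managing the bookkeeping of the rules, is the genuine obstacle. I expect the argument to stall precisely here; this is indeed why the conjecture remains open for large $\Delta$, the strongest unconditional bounds to date having the form $\lceil\tfrac{5}{3}\Delta\rceil+O(1)$, still short of Wegner's constant.
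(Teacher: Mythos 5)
You were asked to prove a statement that the paper itself labels a \emph{conjecture}: this is Wegner's Conjecture from 1977, it has no proof in the paper, and it has no proof anywhere --- it is open (the paper cites it only as motivation, and its actual theorems are far weaker: choosability bounds for $\Delta=4$ under girth or maximum-average-degree hypotheses, with the best known unconditional bound for planar graphs being Molloy--Salavatipour's $\chi(G^2)\le\lceil\frac{5}{3}\Delta\rceil+78$). So there is no ``paper proof'' to compare against, and your text must be judged as a standalone attempt. As such, it is a plan rather than a proof: the two components that would carry all the weight --- the catalogue of reducible configurations and the discharging rules together with the vertex-by-vertex, face-by-face verification of nonnegativity --- are asserted to exist but never constructed. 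You concede the collapse explicitly for $\Delta\ge 8$, and your diagnosis there is accurate (the bound $\lceil\frac{3}{2}\Delta\rceil+1$ is attained, so there is no slack and purely local counting cannot succeed), but the middle range $4\le\Delta\le 7$ is equally unresolved in the literature, and ``exploiting planarity and parity to shave the count down to $\Delta+4$'' names a hope, not an argument. Your honest final paragraph is the correct conclusion: the proposal does not prove the statement.

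One concrete technical error is worth flagging even in the sketched parts. Your baseline reduction --- colour $(G-v)^2$ by minimality, then colour $v$ last since $\deg_{G^2}(v)\le\sum_{u\sim v}d(u)\le k-1$ --- is broken as stated: a proper colouring of $(G-v)^2$ need not be proper on $G^2-v$, because two coloured vertices $x,y$ whose only common neighbour in $G$ is $v$ are adjacent in $G^2$ but not in $(G-v)^2$, so the colouring you extend may already conflict. This is precisely why reducibility arguments of this type (including those the paper actually carries out for its Lemmas) delete carefully chosen sets of $2$-vertices inside threads, where one checks that no coloured pair loses a distance-two adjacency through a deleted vertex, and why the induction is phrased as extending a colouring of the square of a smaller graph only in configurations where this subtlety is verified. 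Even repairing this, the fundamental gap remains: the entire reducible-configuration machinery is hypothesized rather than exhibited, so the attempt establishes nothing beyond what was already known --- which is consistent with the statement still being a conjecture.
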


In~\cite{Havet1} Havet, van den Heuvel, McDiarmid, and Reed showed that %for
%$\Delta\geq 8$
the following holds: $\chi(G^2)\leq \frac{3}{2}\Delta(1+o(1))$,
which is also true for the choice number (defined below).
Dvo\v r\'ak, Kr\'al', Nejedl\'y, and \v Skrekovski~\cite{skreko1} showed that
the square of every planar graph of girth at least six with sufficiently large
maximum degree $\Delta$ is $(\Delta+2)$-colorable.
Borodin and Ivanova~\cite{borodin2} strengthened this result to prove 
that for every planar graph $G$ of girth at least six with maximum degree
$\Delta \ge 24$, the choice number of $G^2$ is at most $\Delta+2$.
For colouring (rather than list-colouring), the same authors 
showed~\cite{borodin3} that for every planar graph $G$ of girth at least six 
with maximum degree
$\Delta \ge 18$, the chromatic number of $G^2$ is at most $\Delta+2$.

Lih, Wang, and Zhu~\cite{Lih} showed that the square of a $K_4$-minor free
graph with maximum degree $\Delta$ has chromatic number at most $\lfloor
\frac{3}{2}\Delta \rfloor+1$ if $\Delta \geq 4$ and $\Delta+3$ if $\Delta
=2,3$.  The same bounds were shown to hold for the choice number by
Hetherington and Woodall~\cite{Hetherington}.

All graphs in this paper are undirected, simple, and finite. For standard graph
definitions see~\cite{diestel}.
%The {\it kernel} $K(G)$ of a graph $G$ is the graph we obtain from $G$ by replacing threads by single edges.
Denote by $l(f)$ the length of a face $f$ and by $d(v)$ the degree of a vertex
$v$.  A {\it $k$-vertex} is a vertex of degree $k$.
%A {\it $\leq\!\!k$-vertex} is a vertex of degree at most $k$.
%Similarly a {\it $\geq\!\!k$-vertex}  is a vertex of degree at least $k$.
A {\it $k^-$-vertex} is a vertex of degree at most $k$, and a {\it
$k^+$-vertex} is a vertex of degree at least $k$.  If a vertex $u$ is adjacent to
a $k$-vertex $v$, then $v$ is a $k$-neighbor of $u$.
A {\it thread} between two vertices with degree at least three is a path between
them consisting of only 2-vertices.  A {\it $k$-thread} is a thread with $k$
internal 2-vertices.
If vertices $u$ and $v$ lie on a common thread, then $u$ and $v$ are {\it
weak neighbors} of each other.  Similarly, we define a weak $k$-neighbor.

A {\it colouring} of the vertices of a graph $G$ is a mapping
$c:V(G)\rightarrow \mathbb{N}$; we call elements of $\mathbb{N}$ {\it colours}.
 A colouring is {\it proper} if every two adjacent vertices are mapped to
different colours.  List colouring was first studied by Vizing~\cite{vizing}
and is defined as follows.  Let $G$ be a simple graph. A {\it list-assignment}
$L$ %:V(G)\rightarrow \mathbb{N}$
is an assignment of lists of colours to vertices.  A {\it list-colouring} is
then a colouring where each vertex $v$ receives a colour from $L(v)$. The graph
$G$ is {\it $L$-choosable} if there is a proper $L$-list-colouring.
If $G$ has a list-colouring for every list-assignment with $|L(v)|=k$ for each
vertex $v$, then $G$ is {\it $k$-choosable}.  We will denote the size of
the lists of colours in a specific case simply by $\chi_l$.
The minimum $k$ such that $G$ is $k$-choosable is called the {\it choice number}
of $G$.

To prove our theorem we will use the discharging method, which was first used
by Wernicke~\cite{wernicke}; this technique is used to
prove statements in structural graph theory, and it is commonly applied in the
context of planar graphs. It is most well-known for its central role in the
proof of the Four Colour Theorem.  Here we apply the discharging method in the
more general context of the {\it maximum average degree}, denoted $\mad(G)$,
which is defined as $\mad(G):=\max_{H\subseteq G}\frac{2|E(H)|}{|V(H)|}$,
where $H$ ranges over all subgraphs of $G$.
A straightforward consequence of Euler's Formula is that every planar graph $G$
with girth at least $g$ satisfies $\mad(G)<\frac{2g}{g-2} = 2+\frac{4}{g-2}$.
We call this Fact~1.  Most of our results for planar graphs will follow from
corresponding results for maximum average degree, via Fact 1.

The key tool in many of our proofs is global discharging, which relies on
reducible configurations that may be arbitrarily large.  Global discharging was
introduced by Borodin~\cite{borodin1}.  Typically, the vertices in these
reducible configurations have degrees only 2 and $\Delta$.  Our innovation in
this paper is that we consider arbitrarily large reducible configurations
consisting entirely of 2-vertices and 3-vertices, even though $\Delta=4$.  For
two similar applications of global discharging, see~\cite{cranston1}
and~\cite{borodin2}.

Kostochka and Woodall~\cite{kostochka} conjectured that, for every square of a
graph, the chromatic number and choice number are the same:
\begin{conjecture}[Kostochka and Woodall]
\label{kostochka}
Let $G$ be a simple graph. Then $$\chi_l(G^2) =\chi(G^2).$$
\end{conjecture}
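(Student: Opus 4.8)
The inequality $\chi_l(G^2)\ge \chi(G^2)$ is immediate, since an ordinary colouring is the list-colouring in which every vertex receives the same list $\{1,\dots,\chi(G^2)\}$. So the entire content of the conjecture is the reverse bound $\chi_l(G^2)\le \chi(G^2)$, and my plan is to establish it through the \emph{kernel method} of Bondy, Boppana, and Siegel: if a graph $H$ admits an orientation $D$ in which every induced subdigraph has a kernel (an independent set reachable by a single arc from every outside vertex), then $H$ is $L$-choosable whenever $|L(v)|\ge d^+_D(v)+1$ for all $v$. Writing $k=\chi(G^2)$, it therefore suffices to produce a kernel-perfect orientation $D$ of $G^2$ with $d^+_D(v)\le k-1$ for every vertex $v$. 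Note that a purely acyclic orientation will not do: it only yields choosability from lists of size equal to the colouring number of $G^2$, which can far exceed $k$; the desired orientation must contain directed cycles yet remain kernel-perfect, exactly as in Galvin's theorem on bipartite line graphs.

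The orientation will be read off from an optimal colouring together with the natural clique cover of $G^2$. First I would fix a proper colouring $c\colon V(G)\to\{1,\dots,k\}$ of $G^2$, so that the colour classes $V_1,\dots,V_k$ are independent sets of $G^2$. Next I would exploit the fact that $G^2$ is edge-covered by the cliques $Q_v=\{v\}\cup N_G(v)$, one for each $v\in V(G)$: if $uw\in E(G^2)$ then $u$ and $w$ share a common $G$-neighbour (or are $G$-adjacent), so both lie in a common $Q_x$; and each vertex $w$ belongs to exactly $d_G(w)+1$ of these cliques. Inside each clique $Q_v$ I would install a linear preference order derived from $c$, and then orient $G^2$ so that the global orientation is \emph{stable} with respect to all the clique preferences simultaneously. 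When every vertex lies in only two cliques this is precisely the bipartite stable-matching setup Galvin solved with Gale--Shapley, kernels corresponding to stable matchings; the whole difficulty of the general conjecture is that here a vertex lies in $d_G(w)+1$ cliques at once.

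Consequently the crux — and what I expect to be the main obstacle — is a \emph{stable-configuration theorem for overlapping clique systems}: given the cover $\{Q_v\}$ of $G^2$ with its preference orders, one must guarantee an orientation under which every induced subdigraph has a kernel while keeping each out-degree at most $k-1$. No such theorem is known, and this gap is exactly why the conjecture remains open; I would attack it along two parallel lines. The combinatorial line is to invoke Scarf's lemma, the natural generalisation of Gale--Shapley stability to arbitrary set systems, and attempt to extract an integral kernel from the fractionally balanced solution it provides; reducing kernel-existence to Scarf-type stability, in the spirit of Aharoni--Berger--Ziv, seems the most promising route to bounding the out-degrees by $k-1$. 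The algebraic line is to bypass orientations entirely and apply Alon's Combinatorial Nullstellensatz to the graph polynomial $\prod_{uw\in E(G^2)}(x_u-x_w)$, seeking a monomial $\prod_v x_v^{d_v}$ with nonzero coefficient and every $d_v\le k-1$; the Alon--Tarsi interpretation of this coefficient via even and odd Eulerian suborientations of $G^2$ ties the algebraic route back to the orientation picture. Either way, converting a single proper $k$-colouring of $G^2$ into the global stability data that pins the list chromatic number down to $k$ is the hard step on which the proof stands or falls.
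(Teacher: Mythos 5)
You have misread the status of this statement: in the paper it is precisely that --- a \emph{conjecture}, attributed to Kostochka and Woodall and cited for context; the paper contains no proof of it, and none of the paper's six lemmas bears on it. So there is no proof to compare against, and your proposal must be judged on its own merits. On those merits it is a research plan, not a proof. The easy direction $\chi_l(G^2)\ge\chi(G^2)$ is indeed immediate, and the machinery you quote is accurately stated (the Bondy--Boppana--Siegel kernel lemma, Galvin's use of stable matchings for line graphs of bipartite graphs, the clique cover of $G^2$ by the sets $Q_v=\{v\}\cup N_G(v)$). But the entire content of the argument is delegated to your ``stable-configuration theorem for overlapping clique systems,'' which, as you yourself concede mid-proof, is not known. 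A proof that announces its own missing lemma establishes nothing beyond the trivial inequality; neither the Scarf-lemma line nor the Combinatorial Nullstellensatz line is carried out even in outline for a single nontrivial example.

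There is also a structural reason the kernel route cannot succeed in general: a kernel-perfect orientation of $G^2$ with all out-degrees at most $k-1$ certifies something strictly stronger than $k$-choosability (it yields on-line $k$-choosability, i.e.\ $k$-paintability), and these stronger certificates are known to exceed $\chi_l$ on some graphs, so even where $\chi_l(G^2)=\chi(G^2)$ holds, the orientation you seek need not exist. Decisively, the conjecture itself has since been disproved: Kim and Park (J.\ Graph Theory, 2015) constructed graphs $G$ whose square is the complete multipartite graph with $k$ parts of size $3$, for which $\chi(G^2)=k$ while, by Kierstead's theorem, $\chi_l(G^2)=\lceil(4k-1)/3\rceil>k$ for $k\ge 2$. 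So no strategy --- kernels, Scarf's lemma, or Alon--Tarsi --- can prove the stated equality, and the honest conclusion of your analysis should have been that you had identified why the statement resists proof, not a plan for proving it.
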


When $G$ is a planar graph,
the upper bound on $\chi(G^2)$ in terms of $\Delta$ was succesively improved by Jonas~\cite{Jonas},
 Wong~\cite{Wong},
Van den Heuvel and McGuinness~\cite{Heuvel},
 Agnarsson and Halld´orsson~\cite{Agnarsson},
  Borodin et al.~\cite{borodin} and finally by
 Molloy and Salavatipour~\cite{Molloy} to the best known upper bound so far:
$\chi(G^2)\leq \lceil \frac{5}{3}\Delta\rceil+78.$

The choosability of squares of subcubic planar graphs has been extensively
studied by Dvo\v r\'ak, \v Skrekovski, and Tancer~\cite{skreko}, Montassier and
Raspaud~\cite{montassier}, Thomassen~\cite{thomassen}, Havet~\cite{havet}, and
Cranston and Kim~\cite{cranston}.
For the case $\Delta=4$ there have been no results so far.
We give some upper bounds on $\chi_l(G^2)$ when $\Delta(G)= 4$ and $\mad(G)$
is bounded.  These results imply bounds for $\chi_l(G^2)$ when $G$ is planar
with prescribed girth:

\begin{theorem}
\label{mainthm}
Let $G$ be a graph with maximum degree $\Delta=4$.
The following bounds hold:
\begin{enumerate}[(a)]
	\item[$(a)$]  $G^2$ is $5$-choosable if $\mad(G)<16/7$,
specifically, if $G$ is planar with girth at least 16.
	\item[$(b)$]  $G^2$ is $6$-choosable if $\mad(G)<22/9$,
specifically, if $G$ is planar with girth at least 11.
	\item[$(c)$]  $G^2$ is $7$-choosable if $\mad(G)<18/7$,
specifically, if $G$ is planar with girth at least 9.
	\item[$(d)$]  $G^2$ is $8$-choosable if $\mad(G)<14/5$,
specifically, if $G$ is planar with girth at least 7.
	\item[$(e)$]  $G^2$ is $12$-choosable if $\mad(G)<10/3$,
specifically, if $G$ is planar with girth at least 5.
	\item[$(f)$]  $G^2$ is $14$-choosable if $G$ is planar.
\end{enumerate}
\end{theorem}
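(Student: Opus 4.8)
The plan is to prove parts (a)--(e) as statements about maximum average degree and then read off the planar corollaries from Fact~1: a planar graph of girth at least $g$ has $\mad(G)<2+\frac{4}{g-2}$, and the values $g=16,11,9,7,5$ give exactly the thresholds $\frac{16}{7},\frac{22}{9},\frac{18}{7},\frac{14}{5},\frac{10}{3}$ appearing in (a)--(e). Thus each of (a)--(e) reduces to a claim of the form: if $\Delta(G)\le 4$ and $\mad(G)<\beta$, then $G^2$ is $k$-choosable, for the listed pairs $(\beta,k)$. Part (f) is genuinely different and must be handled by a planar argument: since $\Delta\le 4$ already forces $\mad(G)\le 4$, with equality for $4$-regular graphs, no $\mad$ hypothesis above $4$ has any content, and vertex-only discharging has no slack on a $4$-regular graph; I would therefore prove (f) directly for plane graphs using Euler's formula, discharging faces as well as vertices.

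For each pair $(\beta,k)$ I would argue by contradiction. Fix a list assignment $L$ with $|L(v)|=k$ and let $G$ be a vertex-minimal graph with $\Delta(G)\le 4$ and $\mad(G)<\beta$ whose square is not $L$-colorable. Assign to each vertex the initial charge $\mu(v)=d(v)-\beta$; then $\sum_v\mu(v)=2|E(G)|-\beta|V(G)|<0$, since $\mad(G)<\beta$ applied to $G$ itself gives $2|E(G)|<\beta|V(G)|$. Because every $\beta$ in (a)--(e) is strictly below $4$, a $4$-vertex carries positive charge $4-\beta>0$, so there is genuine surplus to redistribute. The argument then splits into (i) a list of reducible configurations that cannot occur in $G$, and (ii) discharging rules under which the absence of these configurations forces every final charge to be nonnegative, contradicting the negative total. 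The guiding reducibility principle is greedy extension: if deleting a configuration $S$ leaves a graph still satisfying the hypotheses, then by minimality its square is $L$-colorable, and the vertices of $S$ can be coloured one at a time as long as each, when coloured, sees fewer than $k$ already-coloured neighbours in $G^2$. Since $d_{G^2}(v)\le d(v)+\sum_{u\sim v}(d(u)-1)$, this budget is generous precisely for low-degree vertices surrounded by low-degree vertices. One subtlety special to squares is that deleting a vertex of $G$ may destroy a distance-two adjacency of $G^2$ between two surviving vertices; reductions must either avoid this (as for a $1$-vertex, whose removal changes no other distance-two relation) or compensate for the lost constraints.

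The first reducible configurations are routine. A $1$-vertex $v$ with neighbour $u$ satisfies $d_{G^2}(v)=d(u)\le 4<k$, so it can be deleted and coloured last; hence $\delta(G)\ge 2$. Short threads and $2$-vertices with $2$-vertex neighbours are reducible by the same count. The arithmetic of $\beta$ then dictates both the difficulty and the list size: for $\beta=\frac{16}{7}$ only $2$-vertices are charge-deficient (a $2$-vertex has charge $2-\beta<0$, while $3$- and $4$-vertices are positive), so charge need only flow from $3^+$-vertices into incident threads and $k=5$ suffices; but once $\beta$ exceeds $3$, as at $\beta=\frac{10}{3}$, even $3$-vertices become deficient and all surplus must originate at $4$-vertices and reach an extended region of $2$- and $3$-vertices, which simultaneously raises the $G^2$-degrees that must be absorbed. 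This is exactly why the number of colours climbs from $5$ toward $12$.

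The hard part, and the reason for the paper's main innovation, is the reducibility of arbitrarily large configurations built entirely from $2$- and $3$-vertices, which arise in the denser regimes, especially parts (e) and (f). Such a region cannot be reduced by deleting a bounded set, because its $G^2$-colouring constraints are linked all along it; I expect instead to need \emph{global} discharging in the style of Borodin, in which charge is allowed to travel an unbounded distance through a low-degree region out to the $4$-vertices on its boundary. Establishing reducibility of such a region amounts to colour-extending its square, most naturally by ordering its vertices so that each is coloured while still having an uncoloured neighbour further into the region (a degeneracy-along-the-thread-structure argument), or by an augmenting argument on the $2$- and $3$-vertices. Making this extension succeed for every internal degree pattern, and then matching it to discharging rules that certify nonnegative final charge in each regime --- together with the separate Euler-based discharging required for (f), whose graphs may be $4$-regular --- is where essentially all of the work lies; the charge setup, the small configurations, and the translation to planar girth are comparatively routine.
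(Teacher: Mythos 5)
Your high-level architecture is the same as the paper's: prove (a)--(e) as statements about $\mad(G)$, transfer them to planar graphs with Fact~1, and prove (f) separately by Euler-formula discharging on both vertices and faces of a plane graph (the paper's Lemma~\ref{lemma6} does exactly this, for exactly the reason you give). The minimal-counterexample, reducible-configuration, discharging framework also matches, up to the equivalent normalization $\mu(v)=d(v)-\beta$ versus the paper's $\mu(v)=d(v)$ with target $\beta$. However, there is a genuine gap, in two respects. First, everything that constitutes the actual proof --- the specific reducible configurations, the discharging rules, and the verification of final charges --- is deferred; beyond $\delta(G)\ge 2$ you prove nothing. (A minor, fixable quantifier slip: you fix the list assignment $L$ before choosing the minimal $G$, but $L$ lives on $V(G)$; minimality should be over graphs admitting \emph{some} bad assignment.) Second, and more seriously, the roadmap you give for completing the hard part points in the wrong direction, so a student following your plan would get stuck precisely where you declare the work routine.

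You claim the arbitrarily large configurations of 2- and 3-vertices, and hence global discharging, ``arise in the denser regimes, especially parts (e) and (f),'' while in part (a) ``charge need only flow from $3^+$-vertices into incident threads.'' This is exactly backwards. In parts (e) and (f) the list sizes 12 and 14 make small local configurations reducible (a 2-vertex adjacent to a 3-vertex, two adjacent 3-vertices, any 2-vertex at all in the planar case), which rules out extended low-degree regions entirely; the paper's Lemmas~\ref{lemma5} and~\ref{lemma6} use only bounded configurations and purely local rules. It is the sparse cases (a), (b), (d) where global arguments are unavoidable. Concretely, for (a): take any cubic graph and subdivide every edge exactly twice. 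The result satisfies $\Delta\le 4$ and $\mad=9/4<16/7$, every vertex is a 2- or 3-vertex, and the total deficit of the 2-vertices, $3n\cdot\frac{2}{7}$, exceeds the total surplus of the 3-vertices, $n\cdot\frac{5}{7}$; hence \emph{no} system of discharging rules can succeed on such a graph --- it must itself be shown reducible, even though it is arbitrarily large. The paper does this via configuration $(iii)$ of Lemma~\ref{lemma1}: cycles alternating 3-vertices and 2-threads are 5-reducible, because after deleting the 2-vertices and colouring by minimality, the deleted vertices induce an even cycle in $G^2$ with lists of size at least 2, and even cycles are 2-choosable. Note that your greedy-extension principle cannot prove this: in any vertex ordering of a cycle the last vertex sees as many coloured $G^2$-neighbours as it has colours; one needs 2-choosability of even cycles or the Erd\H{o}s--Rubin--Taylor Choosability Lemma (which the paper also needs, in Lemmas~\ref{lemma2} and~\ref{lemma4}, for cycle-plus-pendant configurations). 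Having excluded these cycles, the 2-threads with two 3-vertex endpoints form a forest, and the paper assigns each such thread a distinct ``sponsor'' endpoint (possible since a tree has more vertices than edges) which sends it extra charge. This sponsorship mechanism --- the actual global-discharging device of the paper --- is absent from your plan, and without it parts (a), (b), and (d) cannot be completed.
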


This theorem is summarized in the following table:
\begin{table}[!h]%
\begin{center}
		\begin{tabular}{ r |c |c |c |c |c |c }
	
%\hline
  $\chi_l\leq$ & $5$ & $6$ & $7$ & $8$ & $12$ & $14$ \\
  \hline
  $\mad(G)<$ & $16/7$ & $22/9$ & $18/7$ & $14/5$ & $10/3$ & $-$ \\
  \hline
  $\mbox{planar and~}g\geq$ &  $16$ & $11$ & $9$ & $7$ & $5$ & $3$  \\
%  \hline

\end{tabular}
\caption{Upper bounds on the choice number for squares of graphs with $\Delta=4$
and bounded maximum average degree, including planar graphs with bounded girth.}
\label{}
\end{center}
\end{table}
\vspace{-.2in}

We will prove each of the claims by contradiction while studying the smallest counterexample to the claim with respect to the number of vertices.
If we remove one or more vertices from this graph we know that its square can be properly coloured with the lists provided.
We will use this fact in the proofs of the claims.
%In Section 2 the configurations we need are shown to be reducible.% e.g. they can not appear in the minimal counterexample.
\subsection{Reducible configurations}

A {\it configuration} is an induced subgraph $C$ of a graph $G$. We call a
configuration {\it reducible} if it cannot appear in a minimal counterexample.
%to a claim of choosability.
To prove that a configuration is reducible, we infer from the minimality of
$G$ that subgraph $G-H$ can be properly coloured, and then prove that this
colouring can be extended to a proper colouring of the original graph $G$;
this gives a contradiction.
A configuration is {\it $k$-reducible} if it is reducible in the setting of
$k$-choosability. Clearly a $k$-reducible configuration is also
$(k+1)$-reducible.

We split our proof of the main theorem into six lemmas, one for each part of
the theorem.  Within each lemma, we prove the reducibility of the
configurations used in that lemma.
Once we prove a configuration is reducible, we will assume that such
a configuration is not present in a minimal counterexample to that lemma.

We will prove that the configurations are reducible by using the same method
each time: remove some vertices and colour the remaining graph by minimality.
If necessary uncolour some vertices, and finally extend this colouring to the
whole graph.
%%
%% Reword this
%%
%These proofs of reducibility work for any girth. In the case
%that some of the vertices, considered to possibly decrease the number of
%colours in a list, are the same they of course cast fewer prohibitions for our
%reasoning.

To simplify the presentation of the reducibility proofs we give figures using
the following notation: A removed vertex is marked with a square around it.
An uncoloured vertex is marked with a circle around it.  The minimal number of
colours left in the list of a removed or uncoloured vertex is written next to
it.
These figures allow the reader to quickly verify that the configurations
pictured are reducible.
%All the proofs basically follow the same line which we will call the {\it reduction procedure}:
%\begin{itemize}
%	\item Remove the square vertices.
%%	\item Obtain a colouring by minimality.
%	\item Uncolour the circle vertices to ensure that vertices at distance 2 receive different colours.
%	%\item Extend the colouring to the entire graph.
%\end{itemize}
In the first few reducibility proofs we will provide detailed reasoning but in
the remaining ones we will only present the corresponding figure and
leave the details to the reader.

We call a graph {\it degree choosable} if it can be colored from any list
assignment $L$ such that $|L(v)|=d(v)$ for all $v\in V(G)$.
For a few of the reducibility proofs, we will need the following result of
Erd\H{o}s, Rubin, and Taylor~\cite{ERT}:

\begin{lem}[Choosability Lemma]
\label{ERTlemma}
A connected graph fails to be degree-choosable if and only if every block is a
complete graph or an odd cycle.
\end{lem}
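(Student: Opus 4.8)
The plan is to prove the Choosability Lemma in both directions, calling a connected graph in which every block is a complete graph or an odd cycle a \emph{Gallai tree}. The one tool I will use repeatedly is the following elementary observation: if $H$ is connected, $L$ is a list assignment with $|L(v)|\ge d_H(v)$ for every $v$, and $|L(u)|>d_H(u)$ for at least one vertex $u$, then $H$ is $L$-colourable. To see this, order the vertices by non-increasing distance from $u$ and colour greedily; every vertex $v$ except $u$ is coloured before the neighbour that lies one step closer to $u$, so it sees at most $d_H(v)-1<|L(v)|$ colours, while $u$, coloured last, sees at most $d_H(u)<|L(u)|$ colours.

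For the direction that every Gallai tree fails to be degree-choosable, I would construct a bad list assignment by induction on the number of blocks. A single block equal to $K_n$ receives the constant list $\{1,\dots,n-1\}$ (impossible, since $\chi(K_n)=n$), and a single block that is an odd cycle receives the constant list $\{1,2\}$ (no proper $2$-colouring). For the inductive step, split a Gallai tree $G$ at a cut vertex $r$ into two smaller Gallai trees $G_1,G_2$ meeting only in $r$, and apply induction to each using \emph{disjoint} colour universes. Since $d_G(r)=d_{G_1}(r)+d_{G_2}(r)$, the combined list at $r$ has the right size, and every other vertex keeps its list, so all lists equal degrees. If $G$ had a proper colouring, the colour of $r$ would lie in the universe of exactly one side, say $G_2$; but then the restriction of the colouring to $G_2$ is a proper colouring of $G_2$ from its lists, contradicting the inductive hypothesis. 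The only point needing care here is that the two palettes be made genuinely disjoint, so that the colour of $r$ is attributable to a single side.

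The substantial direction is the converse: if $G$ is connected and some block $B$ is neither a complete graph nor an odd cycle, then $G$ is degree-choosable. Fix a list assignment $L$ with $|L(v)|=d_G(v)$. Because $B$ is not complete it contains two vertices $u,w$ at distance exactly $2$, with a common neighbour $v$; using the $2$-connectivity of $B$ and the connectivity of $G$, I would choose such a pair with $G-\{u,w\}$ still connected. If $L(u)\cap L(w)\neq\varnothing$, pick a common colour $c$, colour both $u$ and $w$ with $c$, and then colour $G-\{u,w\}$ greedily in reverse breadth-first-search order rooted at $v$: every vertex $z$ other than $v$ has an uncoloured neighbour one step closer to $v$ and so sees at most $d_G(z)-1$ colours, while $v$, coloured last, sees $u$ and $w$ in the \emph{same} colour and hence at most $d_G(v)-1$ colours. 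This colours all of $G$.

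The main obstacle is precisely the case $L(u)\cap L(w)=\varnothing$, in which one cannot force a repeated colour at $v$; its prototypical instance is an even cycle, where every distance-$2$ pair may carry disjoint lists. For even cycles I would invoke their $2$-choosability directly. For the general disjoint-list case I expect to need a structural input in the spirit of Rubin's block lemma --- that a $2$-connected graph which is neither complete nor a cycle contains a suitable even cycle (with at most one chord) or theta-configuration --- on which an alternating recolouring creates the slack that a single pair $u,w$ fails to provide. With the statement established for $B$ in this strengthened, slack-producing form, the passage to all of $G$ is again handled by the reverse breadth-first-search ordering rooted inside $B$, so that the flexibility is generated \emph{internally} to $B$ and then propagated outward. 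I would deliberately avoid peeling blocks off one at a time, since a single precoloured vertex of $B$ need not extend --- already $C_4$ can fail to extend a precolouring --- which is exactly why the slack must be produced inside $B$ rather than imported across a cut vertex.
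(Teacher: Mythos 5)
First, for context: the paper does not prove this statement at all --- it is quoted as a known theorem of Erd\H{o}s, Rubin, and Taylor, with a citation to their paper, and is used elsewhere as a black box (in Lemmas~2.2 and~2.4). So your attempt can only be measured against the actual ERT argument. Your easy direction is essentially that argument and is sound: gluing two bad list assignments with disjoint palettes at a cut vertex $r$, using $d_G(r)=d_{G_1}(r)+d_{G_2}(r)$, is the standard construction, and your observation that the colour of $r$ is attributable to exactly one side makes the contradiction go through. Likewise, your greedy ``spare colour'' observation and the treatment of the case $L(u)\cap L(w)\neq\varnothing$ (colour a distance-2 pair with a common colour, then colour greedily toward the common neighbour $v$) are correct.

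The converse direction, however, has a genuine gap, and it sits exactly where the real difficulty of the theorem lies. Everything you prove disposes only of the intersecting-lists case; for $L(u)\cap L(w)=\varnothing$ you write that you ``expect to need a structural input in the spirit of Rubin's block lemma'' together with an unspecified ``alternating recolouring.'' That input --- every $2$-connected graph that is neither complete nor a cycle contains an induced even cycle with at most one chord --- plus the verification that such a configuration is degree-choosable (so that colouring it first and propagating outward by your reverse-BFS ordering finishes $G$) is essentially the entire content of the hard direction; ERT's proof is devoted to precisely these two points, and announcing that one expects to need them is not a proof of them. There is also a subsidiary unjustified step: the claim that one can always choose the distance-2 pair $u,w$ in $B$ with $G-\{u,w\}$ connected is false as stated. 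For example, if $G$ is a $4$-cycle $v_1v_2v_3v_4$ with a pendant vertex attached to $v_1$, then the block $B=C_4$ is neither complete nor an odd cycle, yet removing either distance-2 pair of $B$ disconnects $G$; more generally, if $u$ or $w$ is a cut vertex of $G$, deletion orphans the components hanging off it. (Your even-cycle escape clause happens to cover this example, but the general claim requires a Lov\'asz-type connectivity lemma and a restriction to non-cut vertices, neither of which you supply.) In short: the skeleton and the easy direction are right, but the core of the converse --- Rubin's block lemma and the degree-choosability of the subgraphs it produces --- is missing, so the proposal is an outline rather than a proof.
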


\section{Proof of the Main Theorem}

In this section, we prove our main result, Theorem~\ref{mainthm}.  The six
parts of Theorem~\ref{mainthm} are completely independent, so we present the
proof as six self-contained lemmas, each proving a corresponding part
of the theorem.  The proofs of Lemmas~\ref{lemma1}--\ref{lemma5} all use
maximum average degree, while Lemma~\ref{lemma6} requires planarity, since it
sends charge to faces.  The proofs of Lemmas~\ref{lemma1}, \ref{lemma2}, and
\ref{lemma4} make use of global discharging; the easiest of these proofs is
Lemma~\ref{lemma1}, while Lemmas~\ref{lemma2} and \ref{lemma4} require
additional details and subtlety.  We now prove the six lemmas without further
comment.

\begin{lem}
\label{lemma1}
If $\Delta(G)\le 4$ and $\mad(G)<16/7$, then $\chi_l(G^2)\le 5$.  In particular,
for every planar graph $G$ with $\Delta(G)\le 4$ and girth at least 16, we have
$\chi_l(G^2)\le 5$.
\end{lem}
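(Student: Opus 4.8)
The plan is to argue by contradiction, so suppose $G$ is a counterexample to Lemma~\ref{lemma1} minimizing $|V(G)|$; thus $\Delta(G)\le 4$, $\mad(G)<16/7$, and $G^2$ is not $5$-choosable, while $G^2$ fails to be $5$-choosable for some list assignment $L$ with $|L(v)|=5$ for all $v$. By minimality, for any proper subgraph $H$ the square $H^2$ is $L$-colourable. The threshold $16/7 = 2 + 4/5$ corresponds (via Fact~1) to girth $\ge 16$, which is a very sparse regime, so I expect the reducible configurations to be long threads and sparse tree-like attachments; the hint in the introduction that the innovation is \emph{global discharging using arbitrarily large configurations of $2$- and $3$-vertices} tells me the key reducible objects will be unbounded structures, not just local ones.

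\textbf{Step 1 (Reducible configurations).} First I would establish a list of forbidden configurations. The most basic is that $G$ has no $1^-$-vertex (a vertex of degree $\le 1$ in $G^2$ has few constrained neighbors and can be greedily coloured last). Next, I would rule out short threads that are too long relative to the degrees of their endpoints: since a $2$-vertex $v$ on a thread has only two $G$-neighbors, its $G^2$-neighborhood is small, so long threads between low-degree vertices are reducible by colouring the rest and extending greedily along the thread, using the Choosability Lemma (Lemma~\ref{ERTlemma}) when the leftover structure on the thread is a path or even cycle. The global configurations I expect to forbid are maximal connected subgraphs all of whose vertices have degree $2$ or $3$: if such a region is large, one removes it, colours the remainder by minimality, and then shows the square of the removed region (a sparse graph of max degree at most $3$, hence with small square-degrees) is degree-choosable from the residual lists after accounting for the few colours used by already-coloured boundary vertices.

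\textbf{Step 2 (Discharging setup).} I would assign each vertex $v$ the initial charge $\ch(v)=d(v)$, so that $\sum_v \ch(v)=2|E(G)| < \frac{16}{7}|V(G)|$ by the hypothesis $\mad(G)<16/7$. The goal is to redistribute charge so that every vertex ends with charge at least $16/7$, producing the contradiction $\frac{16}{7}|V(G)|\le \sum_v \ch^*(v) = \sum_v \ch(v) < \frac{16}{7}|V(G)|$. Here $2$-vertices start deficient ($2 < 16/7$) and $4$-vertices are in surplus ($4 > 16/7$), so the natural rules send charge from high-degree vertices along threads to the $2$-vertices; the delicate point is that $3$-vertices sit very close to the target ($3 > 16/7$ but only by $5/7$), so they can afford to donate only a little, which is exactly why purely local discharging fails and global discharging over the $2/3$-vertex regions becomes necessary.

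\textbf{Step 3 (Verifying the final charge).} After fixing the rules, I would verify $\ch^*(v)\ge 16/7$ for each vertex by a case analysis on $d(v)$, using in each case the reducible configurations from Step~1 to bound how much charge a vertex must send out (e.g.\ a $4$-vertex cannot have too many long threads emanating from it, and the large $2/3$-vertex regions excluded in Step~1 guarantee that charge received by a deficient region is enough). \textbf{The main obstacle I anticipate is Step~1 for the global configurations}: proving that an arbitrarily large induced subgraph of $2$- and $3$-vertices is reducible requires showing its square is degree-choosable from the reduced lists, which means carefully tracking how many colours the boundary steals and invoking Lemma~\ref{ERTlemma} to rule out the bad blocks (complete graphs and odd cycles) in the square of the removed region. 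Getting the discharging rules to interact correctly with these unbounded regions — so that every region of deficient vertices borders enough surplus to reach the target $16/7$ — is the subtle accounting that makes this the hardest part of the argument.
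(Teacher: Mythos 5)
Your overall architecture (minimal counterexample, initial charge $d(v)$, target $16/7$, local thread reductions plus an unbounded reducible configuration) matches the paper, and your local reductions in Step 1 are essentially the paper's configurations $(i)$ and $(ii)$. But the heart of your plan --- the global configuration --- does not work as you describe it. You propose to remove a maximal connected region $R$ of $2$- and $3$-vertices, colour $G-R$ by minimality, and then colour $R$ using degree-choosability (Lemma~\ref{ERTlemma}) from the residual lists. For that you need, for every $v\in R$, that the residual list is at least as large as the degree of $v$ in the uncoloured part of $G^2$; since each coloured $G^2$-neighbour of $v$ removes at most one colour from a list of size $5$, this requirement is equivalent to $d_{G^2}(v)\le 5$. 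That inequality fails for \emph{every} $3$-vertex of $R$: a $3$-vertex whose neighbours all have degree at least $2$ already satisfies $d_{G^2}(v)\ge 3+3=6$ (and up to $9$). So the residual lists can be strictly smaller than the relevant degrees and degree-choosability gives nothing. A second, independent problem is that Lemma~\ref{ERTlemma} fails exactly when the uncoloured graph is an odd cycle (lists of size $2$ do not suffice), so even where the degree count works you must control parity, which your plan does not address.

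The paper sidesteps both issues by using a far more specific unbounded configuration: a cycle of length $3k$ whose degrees repeat the pattern $3,2,2$. One deletes \emph{only the $2$-vertices}, colours everything else (including the $3$-vertices of the cycle) by minimality; each deleted $2$-vertex has $d_{G^2}(v)=5$ exactly, hence at least $5-3=2$ free colours, and the uncoloured vertices induce in $G^2$ an \emph{even} cycle $C_{2k}$, which is $2$-choosable --- the parity is automatic from the $3,2,2$ pattern. Equally important is how this reducibility feeds the discharging, which your Steps 2--3 leave unspecified: since that cyclic configuration cannot occur, the subgraph $H$ formed by $2$-threads with two $3$-vertex endpoints is a forest, so each such $2$-thread can be assigned a distinct sponsoring $3$-vertex; the concrete rules are that $3$-vertices send $1/7$ and $4$-vertices send $3/7$ to each incident thread, and a sponsor sends an extra $2/7$ to its sponsored $2$-thread, giving $\mu^*(v)\ge 3-3(1/7)-2/7=16/7$ for $3$-vertices and exactly $4/7$ to the needy $2$-threads. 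Without this sponsorship mechanism (or some replacement for it), your discharging cannot close, because a $2$-thread between two $3$-vertices receives only $2/7$ from its endpoints, short of the $4/7$ it needs.
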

\begin{proof}
The second statement follows from the first by Fact~1.  To prove the first, we
use discharging.  Let $G$ be a minimal counterexample to the lemma, i.e., a
minimal graph with $\Delta(G)\le 4$ and $\mad(G)<16/7$ such that
$\chi_l(G^2)>5$.
For each vertex $v$, we begin with charge $\mu(v)=d(v)$; we will show that
after the discharging phase each vertex finishes with charge at least $16/7$,
which gives a contradiction and proves the lemma.
%A 2-vertex $u$ is a {\it weak neighbor} of a $3^+$-vertex $v$ if $v$ is the
%endpoint of a thread containing $u$.  A $k$-vertex adjacent to a vertex $v$ is
%a {\it $k$-neighbor} of $v$.

We call a configuration {\it 5-reducible} if it cannot appear in a minimal
counterexample to the lemma. We use the following configurations (see Fig.~\ref{slika1}):
\begin{itemize}
    \item[$(i)$] \textit{A 4-thread is 5-reducible.}
        Let $v$ and $w$ be the middle two vertices of 
        the 4-thread.  By the minimality of $G$ we can 5-list-color 
        $(G\setminus\{v,w\})^2$.  Now $v$ and $w$ each have at least 2 colors 
        available, so we can extend the coloring to $G$.
    \item[$(ii)$] \textit{ A 3-thread $S$ incident to a 3-vertex $u$ is 5-reducible.} 
        Let $v$ be the 2-vertex on $S$ adjacent to $u$ and let $w$ be the 2-vertex
        adjacent to $v$.  By the minimality of $G$, we can 5-list-color 
        $(G\setminus\{v,w\})^2$.  Now $u$ and $v$ have at least one and two available
        colors, respectively.  So we can extend the coloring to $G$ by coloring $u$
        then $v$.
    \item[$(iii)$] \textit{A $3k$-cycle $C_{3k}$ with $d(v_{3i})=3$ for all $i$ and
        $d(v_{3i+1})=d(v_{3i+2})=2$ for all $i$ (the subscripts are modulo $3k$) 
        is 5-reducible.}  Let $S=\{v_{3i}:1\le i\le k\}$.  We delete all 
        vertices on $C_{3k}$ with degree 2. Now the subgraph of $G^2$ that we 
        must color, $(C_{3k})^2\setminus S\cong C_{2k}$, is isomorphic to an even cycle.
        Each uncolored vertex has at most 3 restrictions on its color, so it 
        has a list of at least 2 available colors. Now we can extend the coloring to $G$ since $\chi_l(C_{2k})=2$ (this is an easy exercise, and also follows immediately from the Choosability Lemma).
\end{itemize}

Let $H$ denote the subgraph of $G$ induced by 2-threads with 3-vertices at both
ends.  Since configuration $(iii)$ is 5-reducible, $H$ must be acyclic.
Since every tree has one more vertex than edge, we can recursively assign each
2-thread in $H$ to be sponsored by an incident 3-vertex such that each 3-vertex
sponsors at most one 2-thread.  %Similarly,
%each 4-vertex sponsors every incident 2-thread leading to a weak 3-neighbor.

\begin{figure}[htp!]
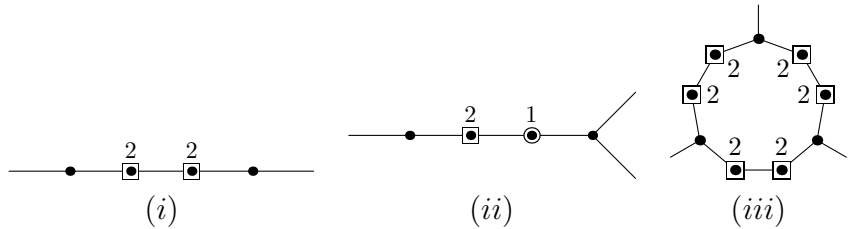

	\centerline{
		\begin{tabular}{ccc}
            \includegraphics[scale=0.9]{fig11.1}\hspace{.5cm}&
            \includegraphics[scale=0.9]{fig11.2}\hspace{.5cm}&
            \includegraphics[scale=1]{fig11.3}\\
            $(i)$ & $(ii)$ & $(iii)$
		\end{tabular}}
	\caption{Configurations $(i)$, $(ii)$, and $(iii)$ from Lemma~\ref{lemma1} are 5-reducible.}
\label{slika1}
\end{figure}

We use the initial charge function $\mu(v)=d(v)$ and the following discharging
rules.

\begin{itemize}
\item {\bf R1:} Each $3$-vertex gives charge $1/7$ to each incident thread.
\item {\bf R2:} Each 4-vertex gives charge $3/7$ to each incident thread.\footnote{If a 4-vertex $v$ is adjacent to two vertices in the same thread,
i.e., $v$ serves as both endpoints of the thread, then $v$ sends twice the
normal charge to the thread; similarly for Lemma~2.}
\item {\bf R3:} Each $3$-vertex incident with a 2-thread  that it sponsors
gives an additional charge of $2/7$ to that 2-thread.
%\item {\bf R4:} Each 3-vertex $v$ sponsors at most one incident 2-thread with
%a 3-vertex at its other end. Vertex $v$ gives charge $1/7$ to each weak
%2-neighbor on that thread (in addition to the charge given in R2).
\end{itemize}

Now we show that each $3^+$-vertex finishes with charge at least $16/7$ and
that each $k$-thread receives charge at least $2k/7$ (so that it finishes with
charge at least $16k/7$).
Note that a 1-vertex is 5-reducible, so $\delta(G)\ge 2$.
First we consider $3^+$-vertices.
If $d(v)=4$, then $v$ gives charge $3/7$ to each incident thread, so
$\mu^*(v)\ge 4-4(3/7)=16/7$.
If $d(v)=3$, then $v$ sends charge $1/7$ to each incident thread and an
additional charge of $2/7$ to at most one incident thread, so $\mu^*(v)\ge 3 -
3(1/7)-1(2/7)=16/7$.

Now we consider threads.
Each 3-thread receives charge $3/7$ from each endpoint, which are both
4-vertices by $(ii)$.  Each 1-thread receives charge at least $1/7$ from each
endpoint.  Each 2-thread with at least one degree 4 endpoint receives charge
$3/7$ from one endpoint and at least $1/7$ from the other.  Finally, each
2-thread with two degree 3 endpoints receives charge $1/7$ from each endpoint
and an additional charge of $2/7$ from its sponsor, for a total of $4/7$.
Thus $\mad(G)\ge 16/7$.  This contradiction completes the proof.
\end{proof}

% Lemma 2
\begin{lem}
\label{lemma2}
If $\Delta(G)\le 4$ and $\mad(G)<22/9$, then $\chi_l(G^2)\le 6$.  In particular,
for every planar graph $G$ with $\Delta(G)\le 4$ and girth at least 11, we have
$\chi_l(G^2)\le 6$.
\end{lem}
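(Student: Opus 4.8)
The plan is to follow the template established in Lemma~\ref{lemma1}, with the parameters tuned to the bound $22/9$. The second statement again follows from the first by Fact~1, since a planar graph of girth at least $11$ satisfies $\mad(G)<\frac{2\cdot 11}{11-2}=\frac{22}{9}$. So I would take a minimal counterexample $G$ with $\Delta(G)\le 4$, $\mad(G)<22/9$, and $\chi_l(G^2)>6$, assign each vertex the initial charge $\mu(v)=d(v)$, and aim to show that after discharging every vertex finishes with charge at least $22/9$. Since discharging preserves total charge, this gives $2|E(G)|=\sum_v\mu(v)=\sum_v\mu^*(v)\ge \frac{22}{9}|V(G)|$, whence $\mad(G)\ge 22/9$, a contradiction.

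First I would establish the ($6$-)reducible configurations that control the structure. Because we now have one more colour than in Lemma~\ref{lemma1}, the thread bound tightens: deleting two consecutive $2$-vertices of a thread of length at least $3$ and recolouring (exactly as in configuration~$(i)$ of Lemma~\ref{lemma1}) leaves each deleted vertex enough available colours to colour the $G^2$-edge between them, so every thread has length at most $2$. A second deletion argument shows that a $2$-thread with a $3$-vertex endpoint is $6$-reducible: the $2$-vertex next to the $3$-vertex has $G^2$-degree only $5$, hence at least two available colours, while its partner has at least one, which suffices. Consequently every surviving $2$-thread has both endpoints of degree $4$. Since each such $2$-thread contains two $2$-vertices, each needing $22/9-2=4/9$, it must receive $8/9$, fed only by its two degree-$4$ endpoints; thus a degree-$4$ vertex must send $4/9$ across each incident $2$-thread. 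As its total surplus is only $4-22/9=14/9$, a degree-$4$ vertex cannot carry four incident $2$-threads, so I would isolate a dense configuration (a $4$-vertex incident to too many $2$-threads) and prove it reducible.

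The discharging itself would again be global. The genuinely tight case is a $1$-thread joining two $3$-vertices, the analogue of the $2$-thread-between-$3$-vertices of Lemma~\ref{lemma1}: each endpoint can afford only a small base charge, so such a thread must be subsidised by a sponsor. As in Lemma~\ref{lemma1} I would let $H$ be the subgraph formed by the $1$-threads with both endpoints of degree $3$, prove that a cycle in $H$ (a cyclic chain of $3$-vertices linked by $1$-threads) is $6$-reducible, and conclude that $H$ is acyclic; then each such $1$-thread can be sponsored by a distinct incident $3$-vertex, with each $3$-vertex sponsoring at most one. The rules would give each $3$-vertex a base charge of $1/9$ to every incident thread plus an extra $2/9$ to its one sponsored thread, and each $4$-vertex a charge to each incident thread graded by the thread's length and the degree of its far endpoint ($4/9$ to each $2$-thread, $2/9$ to a $(4,4)$-$1$-thread, and more to a $(3,4)$-$1$-thread), calibrated so that every $3^+$-vertex finishes at exactly $22/9$ and every thread receives $4/9$ per $2$-vertex.

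The hard part will be twofold. First, the reducibility of the dense $4$-vertex configuration is delicate: around the central $4$-vertex the adjacent $2$-vertices form a clique in $G^2$, and after deleting the thread interiors one must colour this clique so that each partner $2$-vertex sitting next to a \emph{far} degree-$4$ vertex still retains a colour. This requires coordinating the clique colours with the colours already forced near the far $4$-vertices, a Hall-type choice rather than a one-pass greedy extension. Second, the sponsorship bookkeeping is more involved than in Lemma~\ref{lemma1}, because $(3,4)$-$1$-threads also strain the degree-$4$ budget; the acyclicity of $H$ must therefore be leveraged to assign sponsors so as to simultaneously relieve every tight $1$-thread and keep every $3$-vertex and every $4$-vertex within budget. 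Verifying that a single consistent assignment of sponsors achieves both, and that the resulting final charges all reach $22/9$, is where the additional subtlety of this lemma lies.
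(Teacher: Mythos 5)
Your skeleton matches the paper's: Fact~1 for the girth statement, a minimal counterexample with initial charge $\mu(v)=d(v)$ and target $22/9$, the reduction of 3-threads, and the reduction of 2-threads with a 3-vertex endpoint (so all surviving 2-threads join two 4-vertices). The global part of your plan, however, has two genuine gaps. First, your claim that the subgraph of $(3,3)$-1-threads is acyclic ``as in Lemma~\ref{lemma1}'' does not go through. In Lemma~\ref{lemma1} the deleted 2-vertices of a cycle of 2-threads induce in $G^2$ a cycle $C_{2k}$, which is \emph{even} (each 2-thread contributes two vertices) and hence 2-choosable; for 1-threads, a cycle through $k$ 3-vertices leaves an uncoloured cycle of length exactly $k$ in $G^2$ whose vertices have only 2 available colours, and when $k$ is odd this is an odd cycle, which is not 2-choosable. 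Pure cycles of light 1-threads are not reducible by this method, and the paper never claims they are: it proves only that a cycle of this subgraph \emph{with a pendant edge} is 6-reducible (colour one 3-vertex first, then apply the Choosability Lemma~\ref{ERTlemma} to the cycle-plus-pendant-vertex, which is 2-connected and neither complete nor an odd cycle). Components are therefore trees \emph{or cycles} --- and that weaker statement still suffices, because a cycle has as many edges as vertices, so each 1-thread on it can be sponsored by a distinct incident 3-vertex. You need this weaker statement; acyclicity is both unprovable by your argument and unnecessary.

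Second, your graded rules force more local reducibility than you acknowledge. Under ``$4/9$ per 2-thread, $3/9$ per $(3,4)$-1-thread, $2/9$ per $(4,4)$-1-thread,'' a 4-vertex incident to four 2-threads spends $16/9>14/9$, but also a 4-vertex incident to three 2-threads and one $(3,4)$-1-thread spends $15/9>14/9$ (and no rebalancing helps, since a 3-vertex incident to three such 1-threads can give each at most $5/27$). So you must prove \emph{two} dense configurations 6-reducible, and you mention only one; moreover the Hall-type extension arguments you defer are exactly the delicate part, and you give no indication of how they go. The paper sidesteps all of this by sponsoring 2-threads as well: the subgraph induced by 2-threads \emph{is} acyclic (here the parity argument of Lemma~\ref{lemma1} does work), so each 2-thread receives $3/9$ from both endpoints plus a $2/9$ bonus from a sponsoring endpoint, and the uniform rules ($3/9$ per incident thread from a 4-vertex, $1/9$ from a 3-vertex, plus $2/9$ to at most one sponsored thread) leave every vertex with exactly $22/9$ even when a 4-vertex is incident to four 2-threads. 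If you wish to keep your route you must supply both reducibility proofs; the paper's second sponsorship makes them unnecessary.
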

\begin{proof}
The second statement follows from the first by Fact~1.  To prove the first,
we use discharging.  Let $G$ be a minimal counterexample to the lemma.
%graph with $\Delta(G)\le 4$ and $\mad(G)<22/9$ such that $\chi_l(G^2)>6$.
For each vertex $v$, we begin with charge $\mu(v)=d(v)$,
and we will show that after discharging each vertex finishes with charge at
least $22/9$, which gives a contradiction and proves the lemma.
%A 2-vertex $u$ is a {\it weak neighbor} of a $3^+$-vertex $v$ if $v$ is the
%endpoint of a thread containing $u$.  A $k$-vertex adjacent to a vertex $v$ is
%a {\it $k$-neighbor} of $v$.

\begin{figure}[htp!]
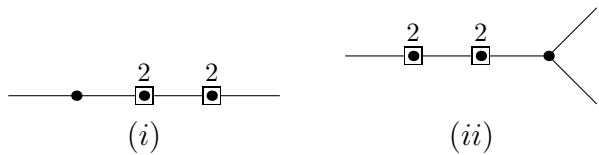

	\centerline{
		\begin{tabular}{ccc}
            \includegraphics[scale=1]{fig21.1}&\hspace{2cm}&\includegraphics[scale=1]{fig21.2}\\
            $(i)$ & &$(ii)$
		\end{tabular}}
\caption{Configurations $(i)$ and $(ii)$ from Lemma~\ref{lemma2} are 6-reducible. }
\label{slika11}
\end{figure}

We call a configuration {\it 6-reducible} if it cannot appear in a minimal
counterexample to the lemma. We use the following configurations (see Fig.~\ref{slika11}):
\begin{itemize}
    \item[$(i)$]\textit{ A 3-thread $S$ is 6-reducible.} 
        Let $v$ and $w$ be adjacent 2-vertices on $S$, with $v$ adjacent to an
        endpoint of $S$.  By the minimality of $G$ we can 6-list-color
        $(G\setminus\{v,w\})^2$.  Now $v$ and $w$ have at least 1 and 3 colors
        available, respectively.  So we can extend the coloring to $G$ by coloring $v$
        then $w$.
    \item[$(ii)$] \textit{A 2-thread $T$ incident to a 3-vertex $u$ is 6-reducible.}
        Let $v$ and $w$ be the two 2-vertices of $T$, with $v$ adjacent to $u$.  By the
        minimality of $G$ we can 6-list-color $(G\setminus\{v,w\})^2$.  Now $v$ and $w$
        have at least 2 and 1 colors available, respectively.  So we can extend the
        coloring to $G$ by coloring $w$ then $v$.
\end{itemize}
Let $H$ be the subgraph induced by 2-threads; recall that the endpoints of each
2-thread must be 4-vertices, by $(ii)$.
As in the proof of Lemma~1, $H$ must be acyclic.  Thus, we can assign each
2-thread of $H$ to be sponsored by an incident $4$-vertex such that each
4-vertex sponsors at most one 2-thread.

If a 2-vertex $v$ has two 3-neighbors, call the 1-thread containing $v$ {\it
light}.  Let $J$ be the subgraph induced by light 1-threads.  We will show that
each component of $J$ must be a tree or a cycle.  Suppose instead that $J$
contains a cycle with an incident edge.  We denote the cycle by
$u_1v_1u_2v_2\ldots u_kv_k$ where $d(u_i)=2$ and $d(v_i)=3$ for all $i$ and
$v_1$ is adjacent to a 2-vertex $z$ not on the cycle (which is adjacent to a
second 3-vertex).  By minimality, we can
6-list-color $(G\setminus \{u_1,v_1,u_2,z\})^2$.  Now only three neighbors of
$v_1$ in $G^2$ are colored, so we can color $v_1$.  Finally, we uncolor each
vertex $u_i$.  Now the uncolored vertices induce in $G^2$ a subgraph $K$
consisting of a cycle with a single vertex $z$ adjacent to two successive
vertices on the cycle.  For each vertex $x\in V(K)$, let $L(x)$ denote the
colors available for $x$.  Note that we have $|L(x)|\ge d_K(x)$ for all $x\in
V(K)$.  Thus, by the Choosability Lemma, we can extend the list-coloring to all
of $V(G)$.  So each component of $J$ must be a tree or a cycle; hence we can
assign each 1-thread of $J$ to be sponsored by an incident 3-vertex such that
each 3-vertex sponsors at most one 1-thread.

We use the initial charge function $\mu(v)=d(v)$ and the following discharging
rules.

\begin{itemize}
\item {\bf R1:} Each $3$-vertex gives charge $1/9$ to each incident thread.
\item {\bf R2:} Each 4-vertex gives charge $3/9$ to each incident thread.
\item {\bf R3:} Each $3^+$-vertex incident with a sponsored thread
gives an additional charge of $2/9$ to that thread.
%\item {\bf R4:} Each 3-vertex $v$ sponsors at most one incident 2-thread with
%a 3-vertex at its other end. Vertex $v$ gives charge $1/7$ to each weak
%2-neighbor on that thread (in addition to the charge given in R2).
\end{itemize}

Now we show that each $3^+$-vertex finishes with charge at least $22/9$ and
that each $k$-thread receives charge at least $4k/9$ (so it finishes with
charge at least $22k/9$).  As in Lemma~\ref{lemma1}, note that $\delta(G)\ge 2$.
If $d(v)=4$, then $v$ gives charge $3/9$ to each incident thread and an
additional $2/9$ to at most one sponsored thread, so $\mu^*(v)\ge
4-4(3/9)-1(2/9)=22/9$.
If $d(v)=3$, then $v$ sends charge $1/9$ to each incident thread and an
additional $2/9$ to at most one incident thread, so $\mu^*(v)\ge 3 -
3(1/9)-1(2/9)=22/9$.

Now we consider threads.
Each 2-thread receives charge $3/9$ from each endpoint and charge 2/9 from its
sponsor, for a total charge of $8/9$.  Consider a 1-thread with interior
2-vertex $v$.  If $v$ has at least one 4-neighbor, then the 1-thread receives
charge at least $3/9+1/9=4/9$.  Each 1-thread with both endpoints of degree 3
receives charge $1/9$ from each endpoint and charge $2/9$ from its
sponsor for a total charge of $4/9$.  Thus $\mad(G)\ge 22/9$.  This
contradiction completes the proof.
\end{proof}

% Lemma 3
\begin{lem}
\label{lemma3}
If $\Delta(G)\le 4$ and $\mad(G)<18/7$, then $\chi_l(G^2)\le 7$.  In particular,
for every planar graph $G$ with $\Delta(G)\le 4$ and girth at least 9, we have
$\chi_l(G^2)\le 7$.
\end{lem}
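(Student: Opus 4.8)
The plan is to mirror the structure of Lemmas~\ref{lemma1} and \ref{lemma2}: argue by contradiction on a minimal counterexample $G$ with $\Delta(G)\le 4$, $\mad(G)<18/7$, and $\chi_l(G^2)>7$, assign initial charge $\mu(v)=d(v)$, and show that after discharging every vertex finishes with charge at least $18/7$, contradicting $\mad(G)<18/7$. The target $18/7$ is slightly above $22/9$, and with $7$ colours available each vertex in $G^2$ has more slack, so the reducible configurations should be smaller and the discharging milder. First I would establish the basic reducible configurations. A $1$-vertex is $7$-reducible, so $\delta(G)\ge 2$. Then I expect a short thread to be reducible: with $7$ colours, removing one or two interior $2$-vertices of a long enough thread leaves each removed vertex with enough free colours (a $2$-vertex in $G^2$ sees at most four other vertices, so removing it frees it to have $\ge 3$ colours once neighbours are coloured). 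So I would prove that a $k$-thread for $k$ at least some small bound (likely a $2$-thread, or a $2$-thread incident to a low-degree vertex) is $7$-reducible, following the ``colour the rest by minimality, then extend greedily in the right order'' template used in $(i)$ and $(ii)$ of the earlier lemmas.

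Next I would identify the configurations involving low-degree vertices that force the charge to balance. Because the bound $18/7$ is between $16/7$ and $22/9$, the critical tight cases will again be threads with $3$-vertex endpoints and $3$-vertices with several incident threads. I anticipate reducible configurations ruling out, for instance, a $3$-vertex incident to too many long threads, or adjacent low-degree structures, and possibly a sponsorship argument: as in Lemmas~\ref{lemma1} and \ref{lemma2}, I would let $H$ be the subgraph induced by the ``heavy'' threads (those whose endpoints are all of degree~$3$), invoke a cycle configuration being reducible (via the Choosability Lemma, exactly as in the $J$-component argument of Lemma~\ref{lemma2}) to conclude $H$ is acyclic or has only tree/cycle components, and then use the tree structure to assign each such thread a distinct sponsoring $3^+$-vertex. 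The global discharging step, showing the relevant auxiliary subgraph is acyclic so that sponsorship is well-defined, is the part I would set up most carefully.

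For the discharging rules I would use amounts with denominator $7$: each $3$-vertex gives $1/7$ to each incident thread, each $4$-vertex gives $3/7$ (or $2/7$) to each incident thread, and a sponsoring vertex gives an additional fixed charge to its sponsored thread, with the exact fractions chosen so that a $4$-vertex keeps $4-4\cdot(\text{gift})\ge 18/7$, a $3$-vertex keeps $3-3\cdot(1/7)-(\text{bonus})\ge 18/7$, and each $k$-thread collects at least $(18/7)(k+1)-\text{(its own }k\text{)}$, i.e.\ net $\ge$ the amount needed so its $k$ interior $2$-vertices each reach $18/7$. Concretely a $3$-vertex can afford to give away $3-18/7=3/7$ total, and a $4$-vertex $4-18/7=10/7$; each $2$-vertex on a thread needs to gain $18/7-2=4/7$, so a $k$-thread needs total incoming charge $4k/7$. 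I would then verify the thread tallies case by case ($1$-threads with a $4$-endpoint, $2$-threads, and the sponsored all-$3$-endpoint threads) exactly as in the previous two lemmas.

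The main obstacle I expect is calibrating the sponsorship bonus and the set of reducible thread-configurations so that the tight cases balance to precisely $18/7$ rather than falling short; in particular ensuring that a thread with two degree-$3$ endpoints (receiving only $1/7$ from each) can always be sponsored, which requires the auxiliary ``heavy thread'' subgraph to have no obstructing cycle-with-pendant component. Establishing the reducibility of that cyclic configuration via the Choosability Lemma (Lemma~\ref{ERTlemma}), verifying $|L(x)|\ge d_K(x)$ on the uncoloured subgraph $K$ after removing and uncolouring the right vertices, is the delicate step, just as the $J$-component analysis was in Lemma~\ref{lemma2}.
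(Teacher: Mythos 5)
Your overall frame (minimal counterexample, $\mu(v)=d(v)$, target $18/7$, budgets $3-18/7=3/7$ for a $3$-vertex and $4-18/7=10/7$ for a $4$-vertex, deficit $4/7$ per $2$-vertex) is correct, and your first reducible configuration (two adjacent $2$-vertices are $7$-reducible, so every thread is a $1$-thread) matches the paper. But the core of your plan --- transplanting the thread-sponsorship scheme of Lemmas~\ref{lemma1} and~\ref{lemma2} --- cannot be calibrated at $18/7$, and the numbers you propose already fail. A $4$-vertex with four $2$-neighbors giving $3/7$ each spends $12/7>10/7$, so its gift must drop to at most $5/14$; then a $2$-vertex with one $3$-neighbor and one $4$-neighbor needs at least $8/14-5/14=3/14$ from its $3$-neighbor, more than your $1/7$. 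Worse, sponsorship for light $2$-vertices (those with two $3$-neighbors) is structurally impossible: if such a vertex receives $a$ from each $3$-neighbor plus a bonus $b$ from its sponsor, it needs $2a+b\ge 4/7$; but a sponsoring $3$-vertex that has a \emph{second} light $2$-neighbor spends at least $2a+b$, which must fit in its budget $3/7$. Since $4/7>3/7$, no choice of $a$ and $b$ works. Nor does the tree/cycle decomposition of the auxiliary graph rescue you: in a path of three consecutive light threads, the middle thread must be sponsored by a $3$-vertex that has two light $2$-neighbors, so the bad case is forced. In Lemmas~\ref{lemma1} and~\ref{lemma2} the sponsorship bonus fit exactly ($3(1/7)+2/7=5/7=3-16/7$ and $3(1/9)+2/9=5/9=3-22/9$); at $18/7$ the slack at $3$-vertices is simply too small, and the failure is in the charge budget, not in the existence of a sponsorship assignment.

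The missing idea is that with $7$ colours one has much stronger \emph{local} reducible configurations, which make any global argument unnecessary: the paper proves (ii) a $3$-vertex with three $2$-neighbors is $7$-reducible, and (iii) a $3$-vertex with two $2$-neighbors, one of which is adjacent to a second $3$-vertex, is $7$-reducible. Configuration (iii) is exactly what kills the bad case above: the two $3$-neighbors of a light $2$-vertex each have only one $2$-neighbor, hence each can afford to send it $4/14$, giving $2+8/14=18/7$. The paper's discharging is then purely local: each $4$-vertex gives $5/14$ to each $2$-neighbor; each $3$-vertex with a single $2$-neighbor gives $4/14$ to it; each $3$-vertex with two $2$-neighbors gives $3/14$ to each. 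There is no acyclicity argument, no Choosability Lemma, and no sponsorship anywhere in the proof. So your proposal has a genuine gap: you correctly flagged the calibration as the main obstacle, but the resolution is not a cleverer sponsorship scheme --- it is proving the local configurations (ii) and (iii), after which the global machinery you set up is superfluous.
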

\begin{proof}
The second statement follows from the first by Fact~1.  To prove the first,
we use discharging.  Let $G$ be a minimal counterexample to the lemma.
%graph with $\Delta(G)\le 4$ and $\mad(G)<18/7$ such that $\chi_l(G^2)>7$.
For each vertex $v$, we begin with charge $\mu(v)=d(v)$,
and we will show that after discharging each vertex finishes with charge at
least $18/7$, which gives a contradiction and proves the lemma.
We leave to the reader the details of verifying that each of the three
following configurations is 7-reducible (see Fig.~\ref{slika2}):
\begin{itemize}
    \item[$(i)$] \textit{A thread of two 2-vertices;}
    \item[$(ii)$] \textit{A 3-vertex adjacent to three 2-vertices;}
    \item[$(iii)$]\textit{A 3-vertex, adjacent to two 2-vertices, one of which 
        is adjacent to a second 3-vertex.}
\end{itemize}

We use the initial charge function $\mu(v)=d(v)$ and the following discharging
rules.

\begin{itemize}
\item {\bf R1:} Each $4$-vertex gives charge $5/14$ to each 2-neighbor.
\item {\bf R2:} Each $3$-vertex with a single 2-neighbor gives charge $4/14$ to
that 2-neighbor.
\item {\bf R3:} Each $3$-vertex with two 2-neighbors gives charge $3/14$ to
each 2-neighbor.
\end{itemize}

Now we show that each vertex finishes with charge at least $18/7$.
Note that $\delta(G)\ge 2$.
If $d(v)=2$ and $v$ has a 4-neighbor, then $\mu^*(v)\ge 2+ 5/14+3/14=2+4/7$.
If $d(v)=2$ and $v$ has no 4-neighbor, then $v$ receives charge $4/14$ from each
of its 3-neighbors, since otherwise we have configuration $(iii)$ in Figure~\ref{slika2}.
Now $\mu^*(v)\ge 2+2(4/14)=18/7$.
If $d(v)=3$, then by $(ii)$ $v$ has at most two 2-neighbors, so $\mu^*(v)\ge
3-2(3/14)=18/7$.  Finally, if $d(v)=4$, then $v$ has at most four 2-neighbors,
so $\mu^*(v)\ge 4 - 4(5/14)=18/7$.  Thus, $\mad(G)\ge 18/7$.  This
contradiction completes the proof.
\end{proof}

\begin{figure}[htp!]
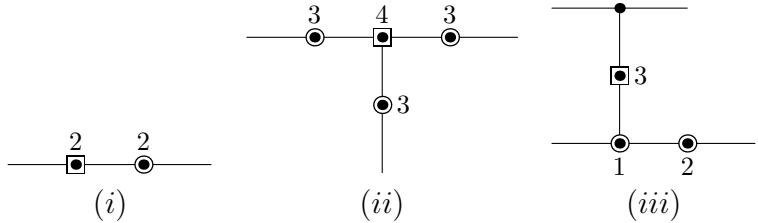

	\centerline{
		\begin{tabular}{ccc}
            \includegraphics[scale=1]{Fig3.1}\hspace{1cm}&
            \includegraphics[scale=1]{Fig3.2}\hspace{1cm}&
            \includegraphics[scale=1]{Fig3.3}\\
            $(i)$ & $(ii)$&$(iii)$
		\end{tabular}}
\caption{Configurations $(i)$, $(ii)$, and $(iii)$ from Lemma~\ref{lemma3} are 7-reducible. }
\label{slika2}
\end{figure}

% Lemma 4
\begin{lem}
\label{lemma4}
If $\Delta(G)\le 4$ and $\mad(G)<14/5$, then $\chi_l(G^2)\le 8$.  In particular,
for every planar graph $G$ with $\Delta(G)\le 4$ and girth at least 7, we have
$\chi_l(G^2)\le 8$.
\end{lem}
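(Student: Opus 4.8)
The plan is to follow the template set by Lemmas~\ref{lemma1}--\ref{lemma3}: the planar statement follows from the $\mad$ statement by Fact~1, so I focus on a minimal counterexample $G$ with $\Delta(G)\le 4$, $\mad(G)<14/5$, and $\chi_l(G^2)>8$. I assign initial charge $\mu(v)=d(v)$ and aim to show every vertex finishes with charge at least $14/5$, contradicting $\mad(G)<14/5$. The budget is stark: a $4$-vertex carries surplus $6/5$, a $3$-vertex carries only $1/5$, and each $2$-vertex must receive $4/5$. Since $7$-reducibility implies $8$-reducibility, I may assume $G$ avoids every configuration of Lemma~\ref{lemma3}; in particular no two $2$-vertices are adjacent (every thread is a $1$-thread), every $3$-vertex has at most two $2$-neighbors, and whenever a $3$-vertex has two $2$-neighbors each of them has a $4$-neighbor on its far side.

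I would verify the remaining local reducibility by the usual recipe: delete the offending $2$-vertices, $8$-list-color the smaller square by minimality, and extend. The uncolored $2$-vertices induce in $G^2$ a small clique or short cycle, and since each keeps a list at least as large as its residual degree, the Choosability Lemma (Lemma~\ref{ERTlemma}) completes the extension. The genuinely hard vertices are the \emph{type-$(c)$} $2$-vertices, whose two neighbors are both $3$-vertices: these neighbors have total surplus only $2/5$, leaving a deficit of $2/5$ that no purely local rule can cover. Worse, since a $3$-vertex has almost no charge, it cannot act as a sponsor in the manner of Lemma~\ref{lemma2}; instead it can only serve as a zero-net \emph{conduit}, so the missing charge has to be drawn from $4$-vertices that may lie arbitrarily far away through the low-degree part of $G$.

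To create that long-range flow I would use global discharging in the spirit of Borodin~\cite{borodin1} and of Lemmas~\ref{lemma1} and~\ref{lemma2}, invoking this paper's signature device of \emph{arbitrarily large reducible configurations built entirely from $2$- and $3$-vertices}. Concretely, I would show that a sufficiently large connected subgraph of $G$ consisting only of $2$- and $3$-vertices is $8$-reducible: after deleting its $2$-vertices and recoloring, the uncolored vertices induce in $G^2$ a graph in which every list meets or exceeds the vertex's degree, and once the region is large enough some block fails to be a complete graph or an odd cycle, so the Choosability Lemma extends the coloring. It then follows that every maximal all-$(2,3)$ region of $G$ is forest-like, which lets me assign to each type-$(c)$ $2$-vertex a distinct $4$-vertex on the boundary of its region as a sponsor and route the sponsorship charge inward through the conduit $3$-vertices. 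The base rules would send roughly $2/5$ from each $4$-vertex along a thread to an ordinary $2$-neighbor and $1/5$ from each $3$-vertex to its $2$-neighbor, with the sponsorship supplying the residual $2/5$ to each type-$(c)$ vertex (and the analogous shortfall to type-$(b)$ vertices).

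The final step is the charge check, and this is where I expect the real difficulty. The inequalities are painfully tight, and the rates cannot be uniform: the subdivision examples show that $4$-vertices with four $2$-neighbors genuinely occur, so a flat rate of $2/5$ would push such a vertex below $14/5$, while a $3$-vertex with surplus only $1/5$ must feed its own $2$-neighbor \emph{and} pass sponsorship through at zero net cost. The main obstacle is therefore twofold: first, pinning down exactly which large $(2,3)$-configurations are reducible so that the Choosability-Lemma argument covers every large low-degree region, including the unicyclic ones; and second, designing the sponsorship so that the $4$-vertex boundary of each region is provably large enough to supply $2/5$ to every type-$(c)$ vertex it encloses while no $4$-vertex sponsors more than once and no $4$-vertex overspends on its own incident threads. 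Once a consistent assignment is in hand, the per-vertex and per-thread verifications reduce to routine arithmetic, and the resulting $\mad(G)\ge 14/5$ contradicts the hypothesis.
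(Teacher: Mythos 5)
Your overall framework (minimal counterexample, $\mu(v)=d(v)$, target charge $14/5$, Fact~1 for the planar statement, a global sponsorship argument somewhere) matches the paper's, but the core of your plan is both unresolved and aimed at the wrong place. You assert that the $2/5$ deficit of a light 2-vertex (your type-$(c)$) ``no purely local rule can cover,'' and must therefore be routed from arbitrarily distant 4-vertices through 3-vertex conduits. That assertion is false, and the paper's proof shows why: it establishes two additional \emph{local} reducible configurations that you never state --- a 3-vertex with two 2-neighbors, and a 3-vertex with two 3-neighbors and a light 2-neighbor. With these in hand, every ``needy'' 3-vertex (one adjacent to a light 2-vertex) has exactly one 2-neighbor and at least one 4-neighbor, so the rules simply send $1/5$ from that 4-neighbor to the needy 3-vertex and $1/5+1/5$ from the needy 3-vertex to its light 2-neighbor; every transfer happens within distance two. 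By contrast, the machinery you propose in place of this --- proving that all sufficiently large connected all-$(2,3)$ regions are 8-reducible, showing the surviving regions are forest-like, and routing distinct boundary 4-vertex sponsors inward through conduits --- is precisely the part you concede you cannot pin down. Nothing in your write-up establishes that such regions are reducible, nor that the boundary can pay $2/5$ per enclosed light 2-vertex without some 4-vertex overspending, so as written this is a research programme rather than a proof.

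The global argument \emph{is} needed in this lemma, but for heavy 2-vertices (two 4-neighbors), not light ones. Under your flat rate of $2/5$ from a 4-vertex to each 2-neighbor, a 4-vertex with four 2-neighbors finishes with $4-4(2/5)=12/5<14/5$; you notice this failure yourself but leave it unrepaired. The paper repairs it by sending only $1/5$ across each such edge and supplying the missing $2/5$ through sponsorship: the subgraph induced by heavy 1-threads (a structure of 2- and 4-vertices, not 2- and 3-vertices) has components that are trees or cycles, by the same cycle-plus-pendant-edge reducibility argument as in Lemma~\ref{lemma2}, so each heavy 2-vertex can be assigned a sponsoring 4-neighbor with no 4-vertex sponsoring twice. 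Two further local configurations --- a 4-vertex with three 2-neighbors one of which is medium, and a 4-vertex with a needy 3-neighbor and two 2-neighbors one of which is medium --- cap what any 4-vertex ever pays, and the tight charge count then closes. Without the light/medium/heavy classification of 2-vertices and these four reducible configurations, the discharging you outline cannot be completed.
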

\begin{proof}
The second statement follows from the first by Fact~1.  To prove the first,
we use discharging.  Let $G$ be a minimal counterexample to the lemma.
%graph with $\Delta(G)\le 4$ and $\mad(G)<14/5$ such that $\chi_l(G^2)>8$.
For each vertex $v$, we begin with charge $\mu(v)=d(v)$,
and we will show that after discharging each vertex finishes with charge at
least $14/5$, which gives a contradiction and proves the lemma.
We call a 2-vertex with two 3-neighbors a {\it light 2-vertex}.
We call a 2-vertex with a 3-neighbor and a 4-neighbor a {\it medium 2-vertex}.
We call a 2-vertex with two 4-neighbors a {\it heavy 2-vertex}.
We call a 3-vertex adjacent to a light 2-vertex a {\it needy 3-vertex}.
Below we note that adjacent 2-vertices are 8-reducible.  This implies that every
2-vertex is heavy, medium, or light.

We leave to the reader the details of verifying the following 8-reducible
configurations.
Recall from the previous lemma that adjacent 2-vertices are 7-reducible, so they
are also 8-reducible. We will use the folowing 8-reducible configurations (see Fig.~\ref{slika22}):
\begin{itemize}
    \item[$(i)$] \textit{ a 3-vertex with two 2-neighbors;}
    \item[$(ii)$] \textit{ a 3-vertex with two 3-neighbors and a light 2-neighbor;}
    \item[$(iii)$] \textit{ a 4-vertex with three 2-neighbors, one of which is medium;}
    \item[$(iv)$] \textit{A 4-vertex with a needy 3-neighbor and two 2-neighbors, one of which is medium.}
\end{itemize}
If a 1-thread $S$ contains a heavy 2-vertex $v$ then we call $S$ {\it heavy}.
Let $J$ be the subgraph induced by heavy 1-threads.  %We will show that
Each component of $J$ must be a tree or a cycle.  Since the proof is identical
to that given in Lemma~\ref{lemma2}, here we do not repeat the details.  Since
each component of $J$ is a tree or a cycle, we can assign each 2-vertex on a
heavy 1-thread to be sponsored by an adjacent 4-vertex, so that each 4-vertex
sponsors at most one such 2-vertex.

We use the initial charge function $\mu(v)=d(v)$ and the following discharging
rules.

\begin{itemize}
\item {\bf R1:} Each $3^+$-vertex gives charge $1/5$ to each adjacent 2-vertex.
\item {\bf R2:} Each 4-vertex gives charge $1/5$ to each adjacent needy 3-vertex.
\item {\bf R3:} Each needy 3-vertex gives an additional $1/5$ to each adjacent light
2-vertex.
\item {\bf R4:} Each 4-vertex gives an additional $2/5$ to each adjacent medium
2-vertex and each adjacent sponsored 2-vertex.
\end{itemize}

\begin{figure}[htp!]
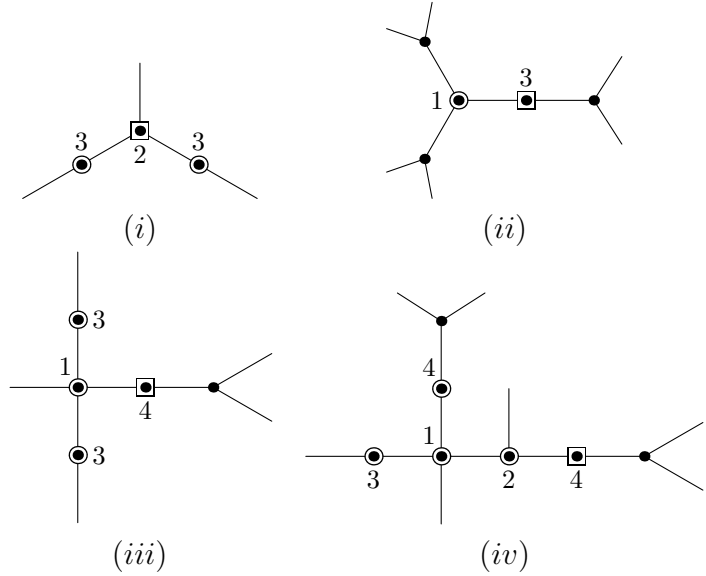

	\centerline{
		\begin{tabular}{cc}
            \includegraphics[scale=1]{Fig4.1}\hspace{1cm}&
            \includegraphics[scale=1]{Fig4.2}\\
            $(i)$ & $(ii)$\\
            \includegraphics[scale=1]{Fig4.3}\hspace{1cm}&
            \includegraphics[scale=1]{Fig4.4}\\
            $(iii)$ & $(iv)$
		\end{tabular}}
\caption{Configurations $(i)$--$(iv)$ from Lemma~\ref{lemma4} are 8-reducible.}
\label{slika22}
\end{figure}

Now we show that each vertex finishes with charge at least $14/5$.  Note that
$\delta(G)\ge 2$.

Suppose $d(v)=2$.
If $v$ is heavy, then $v$ receives charge $1/5$ from each neighbor and an
additional charge $2/5$ from its sponsor, so $\mu^*(v)=2+2(1/5)+2/5=14/5$.
If $v$ is medium, then $v$ receives charge $1/5$ from its 3-neighbor and charge
$1/5+2/5$ from its 4-neighbor, so $\mu^*(v)=2+1/5+1/5+2/5=14/5$.
If $v$ is light, then $v$ receives charge $1/5$ from each neighbor and an
additional charge $1/5$ from each neighbor, so $\mu^*(v)=2+2(2/5)=14/5$.

Suppose $d(v)=3$.  By $(i)$, $v$ has at most one 2-neighbor.  If $v$ has a light
2-neighbor, then $v$ gives it charge $1/5+1/5$ and $v$ receives charge $1/5$
from some 4-neighbor, since otherwise we have configuration $(ii)$.  So
$\mu^*(v)\ge 3-2/5+1/5=14/5$.  If $v$ has a medium 2-neighbor, then $v$ gives
it only charge $1/5$, so $\mu^*(v)\ge 3-1/5=14/5$.

Suppose $d(v)=4$.  If $v$ has no medium neighbors, then $v$ gives charge at most
$1/5$ to each neighbor and an additional charge of $2/5$ to at most one
sponsored 2-vertex, so $\mu^*(v)\ge 4-4(1/5)-2/5=14/5$.  So suppose that $v$
has a medium 2-neighbor.  If $v$ has only one 2-neighbor, then $v$ gives charge
at most 1/5 to each other neighbor and charge $1/5+2/5$ to its medium
2-neighbor, so $\mu^*(v)\ge 4-3(1/5)-1/5-2/5=14/5$.  If $v$ has at least two
2-neighbors, at least one of which is medium, then by configurations $(iii)$ and
$(iv)$, $v$ gives charge to no neighbors besides these two 2-neighbors.  Since
$v$ gives total charge at most $3/5$ to each of these 2-neighbors,
$\mu^*(v)\ge 4-2(3/5)=14/5$.

Thus, each vertex finishes with charge at least $14/5$, so $\mad(G)\ge
14/5$.  This contradiction completes the proof.
\end{proof}

%Lemma5
\begin{lem}
\label{lemma5}
If $\Delta(G)\le 4$ and $\mad(G)<10/3$, then $\chi_l(G^2)\le 12$.  In
particular, for every planar graph $G$ with $\Delta(G)\le 4$ and girth at least
5, we have $\chi_l(G^2)\le 12$.
\end{lem}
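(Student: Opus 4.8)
The plan is to mimic the discharging framework established in Lemmas~\ref{lemma1}--\ref{lemma4}: assume $G$ is a minimal counterexample, assign initial charge $\mu(v)=d(v)$, prove a list of reducible configurations, design discharging rules, and show every vertex ends with charge at least $10/3$, contradicting $\mad(G)<10/3$. Since $\mad(G)<10/3$ corresponds to girth~$5$, which is the last threshold before the planar case handled separately in Lemma~\ref{lemma6}, I expect the charge bound $10/3 = 2 + 4/3$ to be tight against a $3$-regular-like structure, so the discharging rules will move charge from $4$-vertices and $3$-vertices to their $2$-neighbors, with the target of raising each $2$-vertex from $2$ up to $10/3$, i.e.\ by a net $4/3$. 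A first priority is to identify which configurations of $2$-vertices and low-degree vertices are $12$-reducible; since we now have $12$ colors available, a vertex $v$ in $G^2$ has roughly $d_{G^2}(v)\le \Delta^2=16$ neighbors, but deleting and recoloring along short threads or around $2$-vertices should leave enough slack. I would reuse the fact, proved in Lemma~\ref{lemma3}, that a thread of two consecutive $2$-vertices is reducible (it is only easier with $12$ colors), so I may assume $\delta(G)\ge 2$ and that no two $2$-vertices are adjacent, making every $2$-vertex an isolated $2$-vertex with two $3^+$-neighbors.

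The key reducibility targets I would establish are analogues of the Lemma~\ref{lemma4} configurations, tuned to the looser budget of $12$ colors: a $3$-vertex with too many $2$-neighbors, a $4$-vertex whose neighborhood is saturated with $2$-vertices, and the global configuration (a cycle of light $2$-vertices alternating with $3$-vertices, with an incident pendant) that forces the subgraph of light threads to be a forest-plus-cycles. The verification technique throughout is the same: delete a small set of vertices near the configuration, invoke minimality of $G$ to $12$-list-color the rest, possibly uncolor a few vertices, count that each uncolored vertex retains $|L(x)|\ge d_K(x)$ available colors in the uncolored subgraph $K$ of $G^2$, and then finish by the Choosability Lemma (Lemma~\ref{ERTlemma}) once we check that the relevant blocks are neither complete graphs nor odd cycles. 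As in Lemmas~\ref{lemma2} and~\ref{lemma4}, this global argument lets me set up a \emph{sponsorship} scheme assigning each light $2$-vertex (or light $1$-thread) to a distinct adjacent higher-degree vertex, which is what makes the final charge count balance.

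For the discharging rules themselves, I would follow the Lemma~\ref{lemma4} template: each $3^+$-vertex sends a base charge to each adjacent $2$-vertex, each $4$-vertex sends extra charge to needy $3$-vertices, and each needy $3$-vertex passes that charge along to its light $2$-neighbor, with an additional sponsorship charge from $4$-vertices to sponsored $2$-vertices. The exact fractions must be recalibrated so that a $2$-vertex reaches $10/3$ (gaining $4/3$ total), a $3$-vertex ends at $10/3$ only if it is a $10/3$-regular subgraph's worth of degree which is impossible, so really a $3$-vertex must \emph{gain} charge on net and a $4$-vertex ends at exactly $10/3$ after giving away $2/3$. This tells me the base charge to a $2$-vertex should be $2/3$ from each neighbor so that a light or heavy $2$-vertex collects $2\cdot(2/3)=4/3$; then a $4$-vertex giving $2/3$ to each of up to $k$ $2$-neighbors must be prevented (by reducibility) from having $k$ too large, while a $3$-vertex that sheds $2/3$ to a $2$-neighbor drops to $7/3<10/3$ and so must be propped back up by a $4$-vertex --- exactly the needy-$3$-vertex mechanism.

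The hard part, as flagged for the earlier global-discharging lemmas, will be the global configuration argument establishing that the subgraph of light threads decomposes into trees and cycles so that the sponsorship assignment exists; this requires carefully choosing which vertices to delete and uncolor so that the residual uncolored graph $K$ in $G^2$ has a block structure to which the Choosability Lemma applies, and the $12$-color budget has to be checked to be just large enough for the available-color counts $|L(x)|\ge d_K(x)$ to hold along the deleted cycle. A secondary subtlety is the final case analysis for a $4$-vertex with several $2$-neighbors of mixed type (light, medium, heavy, sponsored), where reducibility configurations $(iii)$ and $(iv)$-style statements must rule out the worst combinations; getting the fractions and the reducible configurations to interlock so that every case closes at exactly $10/3$ is where the bookkeeping is most delicate, though each individual reduction is routine given the Choosability Lemma.
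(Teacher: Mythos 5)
Your proposal has two genuine problems. First, the discharging rules you sketch do not balance at ordinary $3$-vertices. You correctly observe that a $3$-vertex starts at $3<10/3$ and therefore must gain charge on net, but your rules (modelled on Lemma~\ref{lemma4}) only send charge from $4$-vertices to \emph{needy} $3$-vertices; a $3$-vertex with no $2$-neighbour neither gives nor receives anything under your scheme and finishes with charge $3<10/3$, so the final count fails. To repair this, every $3$-vertex must receive charge from $4$-neighbours, which in turn requires two reducible configurations you never identify: a $3$-vertex adjacent to two $3$-vertices (so that every $3$-vertex has at least two $4$-neighbours), and a $4$-vertex adjacent to both a $2$-vertex and a $3$-vertex (so that $4$-vertices can afford these payments). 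The paper's rules are exactly this: each $4$-vertex gives $2/3$ to each $2$-neighbour and $1/6$ to each $3$-neighbour, and the two configurations above close the count at $3+2(1/6)=10/3$ and $4-2/3=10/3$.

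Second, you have missed the key reducible configuration that makes this lemma much easier than Lemma~\ref{lemma4}: with $12$ colours, \emph{a $2$-vertex adjacent to a $3$-vertex} is reducible (delete the $2$-vertex, colour the rest by minimality, uncolour its $3$-neighbour; the $2$-vertex then has at most $6$ coloured $G^2$-neighbours and the $3$-vertex at most $9$, so both can be coloured from their lists of size $12$). Consequently every $2$-vertex in a minimal counterexample has two $4$-neighbours, so light and medium $2$-vertices, needy $3$-vertices, sponsorship, and the whole tree-or-cycle/Choosability-Lemma apparatus you plan to import from Lemmas~\ref{lemma2} and~\ref{lemma4} never arise. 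The paper's proof is purely local (four small configurations, two rules); the ``hard part'' you flag --- the global sponsorship argument --- is not needed at all. Moreover, whether your alternative route could be completed is left entirely unverified: propping up a needy $3$-vertex that has shed $2/3$ requires $1/2$ from each of two $4$-neighbours, which then forces yet further unproved reducibility claims (e.g.\ a $4$-vertex adjacent to a $2$-vertex and a needy $3$-vertex already ends at $4-2/3-1/2=17/6<10/3$), a cascade your proposal neither anticipates nor resolves.
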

\begin{proof}
The second statement follows from the first by Fact~1.  To prove the first,
we use discharging.  Let $G$ be a minimal counterexample to the lemma.
%graph with $\Delta(G)\le 4$ and $\mad(G)<10/3$ such that $\chi_l(G^2)>12$.
For each vertex $v$, we begin with charge $\mu(v)=d(v)$,
and we will show that after discharging each vertex finishes with charge at
least $10/3$, which gives a contradiction and proves the lemma.

\begin{figure}[htp!]
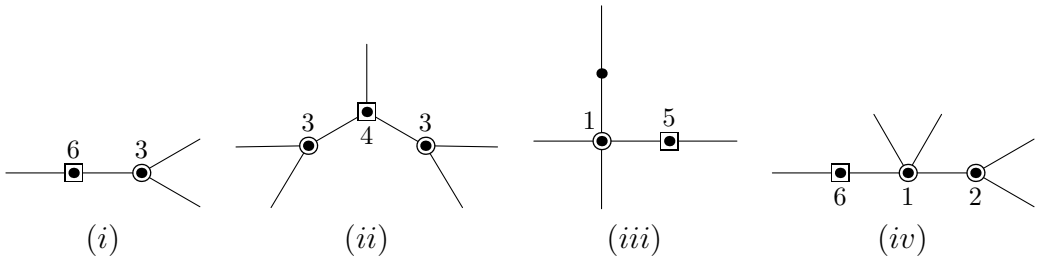

	\centerline{
		\begin{tabular}{cccc}
            \includegraphics[scale=1]{Fig5.1}\hspace{.5cm}&
            \includegraphics[scale=1]{Fig5.2}\hspace{.5cm}&
            \includegraphics[scale=1]{Fig5.3}\hspace{.5cm}&
            \includegraphics[scale=1]{Fig5.4}\\
            $(i)$ & $(ii)$& $(iii)$ & $(iv)$
		\end{tabular}}
\caption{Configurations $(i)$--$(iv)$ from Lemma~\ref{lemma5} are
12-reducible.}
\label{slikaQ15}
\end{figure}

We leave to the reader the details of verifying 12-reducibility of the following four 
configurations (see Fig.~\ref{slikaQ15}):
\begin{itemize}
    \item[$(i)$] \textit{a 2-vertex adjacent to a 3-vertex;}
    \item[$(ii)$] \textit{a 3-vertex adjacent to two 3-vertices;}
    \item[$(iii)$] \textit{a 4-vertex with two adjacent 2-vertices;}
    \item[$(iv)$] \textit{a 4-vertex adjacent to a 2-vertex and a 3-vertex.}
\end{itemize}
We use the initial charge function $\mu(v)=d(v)$ and the following discharging
rules.

\begin{itemize}
\item {\bf R1:} Each 4-vertex gives charge $2/3$ to each adjacent 2-vertex.
\item {\bf R2:} Each 4-vertex gives charge $1/6$ to each adjacent 3-vertex.
\end{itemize}

Now we show that each vertex finishes with charge at least $10/3$.
Note that $\delta(G)\ge 2$.
If $d(v)=2$, then by $(i)$ both neighbors of $v$ are 4-vertices, so
$\mu^*(v)=2+2(2/3)=10/3$.
If $d(v)=3$, then by $(i)$ $v$ has no 2-neighbors and by $(ii)$ $v$ has two
4-neighbors, so $\mu^*(v)\ge3+2(1/6)=10/3$.
Suppose that $d(v)=4$.  If $v$ has a 2-neighbor, then by $(iii)$ and $(iv)$ $v$ has
no other $3^-$-neighbor, so $\mu^*(v)\ge 4-2/3=10/3$.  If $v$ has no
2-neighbor, then $\mu^*(v)\ge 4-4(1/6)=10/3$.
Thus, each vertex finishes with charge at least $10/3$.  This contradiction
completes the proof.
\end{proof}

%Lemma [final]
\begin{lem}
\label{lemma6}
If $G$ is planar and $\Delta(G)\le 4$, then $\chi_l(G^2)\le 14$.
\end{lem}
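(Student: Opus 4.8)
The plan is to argue by contradiction from a minimal counterexample $G$, fixed together with a plane embedding. Since girth is no longer available, the discharging must run over both vertices and faces, which is why this lemma needs planarity. I would take the Euler-based initial charge $\mu(v)=d(v)-4$ and $\mu(f)=l(f)-4$, so that $\sum_v\mu(v)+\sum_f\mu(f)=-8$; the goal is to design rules that leave every vertex and every face with nonnegative charge, contradicting the negative total. (A convenient variant is the scaled weighting $\mu(v)=2d(v)-6$ and $\mu(f)=l(f)-6$, with total $-12$: its advantage is that $4$-vertices then carry positive charge and can serve as donors, which matters since $\Delta=4$ makes $4$-vertices the abundant part of the graph.)

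The reducible configurations should be organized around the degree of a vertex in $G^2$. A vertex $v$ with $\deg_{G^2}(v)\le 13$ has, a priori, a free colour among the $14$ once everything else is coloured, and $\deg_{G^2}(v)\le\sum_{u\sim v}d(u)$ already puts every $2$-vertex ($\le 8$) and every $3$-vertex ($\le 12$) below this threshold. The key structural observation is that triangles reduce $\deg_{G^2}(v)$: when two neighbours of $v$ are adjacent (a triangular face at $v$), a would-be distance-$2$ vertex is absorbed. A short count shows that a $4$-vertex incident to at least two triangles has $\deg_{G^2}(v)\le 12$, whereas a $4$-vertex with only $4$-neighbours and at most one incident triangle can reach $\deg_{G^2}(v)=14$ or even $16$. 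Hence the only dangerous vertices are $4$-vertices that are triangle-poor and have all $4$-neighbours, and such a vertex is necessarily incident to at least three faces of length $\ge 4$ — exactly the faces carrying positive charge. I would therefore route charge from large faces (and, under the scaled weighting, from $4$-vertices) to the needy $2$-vertices, $3$-vertices and triangles, using the reducible configurations to guarantee that every recipient has enough nearby donors.

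The feature that separates $G^2$ from $G$, and that pervades every reducibility proof, is that deleting a vertex can pull apart a distance-$2$ pair: two neighbours of a deleted vertex are adjacent in $G^2$ but may receive the same colour once that vertex is gone, so the inductive colouring need not extend. To respect this I would only delete sets $S$ whose removal preserves every distance-$2$ adjacency among the surviving vertices — the cleanest such sets being pairs of adjacent $2$-vertices (as in Lemmas~\ref{lemma1}--\ref{lemma2}) and a low-degree vertex together with all of its $2$-neighbours. When a merge is unavoidable I would instead delete, colour, and then \emph{uncolour} a small cluster, finishing with the Choosability Lemma after checking that the uncoloured subgraph of $G^2$ has every list at least as large as the corresponding degree and is not a disjoint union of cliques and odd cycles. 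This is how I expect to obtain the basic structure: $\delta(G)\ge 2$, $2$-vertices pairwise nonadjacent, and no clustering of $3^-$-vertices around a single low-degree vertex.

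The main obstacle is squarely at the top of the degree range. A $4$-vertex all of whose neighbours have degree $4$ and which meets few triangles has $\deg_{G^2}(v)$ as large as $16$ while only $14$ colours are available, so it is \emph{not} reducible by deletion alone; worse, deleting it strands its neighbours as a near-clique in $G^2$ that the merging phenomenon can break, and uncolouring to repair it can leave a neighbour with fewer than its degree in available colours. The crux of the argument must therefore be quantitative rather than purely local: the reducible configurations have to be chosen so that one never needs to reduce an isolated dense $4$-vertex, and the discharging must show that a minimal counterexample cannot pack in enough triangle-poor, all-$4$-neighbour vertices for the large faces forced around them to fail to pay off the deficit $-8$. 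Tuning the discharging constants so that every $2$-vertex, $3$-vertex and triangle is compensated while every large face and $4$-vertex stays nonnegative — and doing so without ever violating the merging constraint in a reducibility proof — is where I expect the real work to lie; once the rules close, the contradiction with the total charge completes the proof.
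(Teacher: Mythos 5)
Your skeleton is the paper's: minimal counterexample, Euler charges $\mu(v)=2d(v)-6$ and $\mu(f)=l(f)-6$, reducibility via delete--colour--uncolour plus the Choosability Lemma, and $4$-vertices as the donors. But the proposal stops exactly where the proof has to be carried out: you never specify the reducible configurations or the discharging rules, and you explicitly defer this (``where I expect the real work to lie''). That deferred part \emph{is} the lemma. The paper's list is short and concrete: no $2$-vertex; no two adjacent $3$-vertices; no $3$-vertex on a $3$-face; no $3$-vertex on a $4$-face; no $4$-vertex on two $3$-faces; no $4$-vertex on a $3$-face and a $4$-face. The rules: each $4$-vertex gives $1$ to each incident $3$-face, $1/2$ to each incident $4$-face, and $1/3$ to each incident $5$-face. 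Verification is then immediate: $3$- and $4$-faces are surrounded by $4$-vertices and finish at $-3+3(1)=0$ and $-2+4(1/2)=0$; a $5$-face has at least three incident $4$-vertices (since no two $3$-vertices are adjacent) and finishes at $-1+3(1/3)=0$; a $4$-vertex loses at most $\max\{1+3(1/3),\,4(1/2)\}=2$ by the last two configurations; $3$-vertices sit at charge $0$. Note that charge flows \emph{only} from $4$-vertices to faces --- large faces never donate and no vertex ever receives --- so the routing you sketch (large faces paying needy $2$-vertices, $3$-vertices, and triangles) is not how the accounting closes, and it is doubtful it could: under your primary normalization $\mu(v)=d(v)-4$, $\mu(f)=l(f)-4$, a $4$-regular region whose faces all have length $3$ or $4$ contains no donors at all, and no configuration you propose excludes such regions (a $4$-vertex on a single triangle with all $4$-neighbours has $\deg_{G^2}(v)=14$ and is not reducible by your degree count).

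Your worry about the ``dangerous'' triangle-poor, all-$4$-neighbour $4$-vertices is correct but moot: in the actual proof such vertices are never reduced --- they are precisely the donors --- and the only $4$-vertex configurations that must be shown reducible are the two incident to a pair of $\le 4$-faces. Those succumb to exactly the counting you did: two $3$-faces force $\deg_{G^2}(v)\le 12$, and a $3$-face plus a $4$-face forces $\deg_{G^2}(v)\le 13$, with the merging obstruction handled by the uncolour-the-neighbours-and-apply-the-Choosability-Lemma device you describe. So the ingredients in your plan are the right ones; what is missing is assembling them into a finite list of configurations whose reducibility you actually prove and whose exclusion makes a specific set of rules close. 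Without that, there is no contradiction with the total charge of $-12$, and hence no proof.
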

\begin{proof}
Let $G$ be a minimal planar graph with $\chi_l(G^2)>14$.
The following six configurations are 14-reducible (see Fig.~\ref{slika6}):
\begin{itemize}
    \item[$(i)$] \textit{a 2-vertex;}
    \item[$(ii)$] \textit{two adjacent 3-vertices;}
    \item[$(iii)$] \textit{a 3-vertex incident to a 3-face;}
    \item[$(iv)$] \textit{a 3-vertex incident to a 4-face;}
    \item[$(v)$] \textit{a 4-vertex incident to two 3-faces (sharing an edge or not);}
    \item[$(vi)$] \textit{a 4-vertex incident to a 3-face and a 4-face (sharing an edge or not).} 
\end{itemize}

We use discharging with the following initial charges:
\begin{itemize}
\item $\mu(v)=2d(v)-6$ for each vertex $v$.
\item $\mu(f)=\l(f)-6$ for each face $f$.
\end{itemize}
By Euler's formula, the sum of the charges is negative.
We use the following discharging rules.
\begin{itemize}
\item {\bf R1:} Each 4-vertex gives charge 1 to each incident 3-face.
\item {\bf R2:} Each 4-vertex gives charge 1/2 to each incident 4-face.
\item {\bf R3:} Each 4-vertex gives charge 1/3 to each incident 5-face.
\end{itemize}
\begin{figure}[htp!]
	\centerline{
    \begin{tabular}{cc}
		\begin{tabular}{cc}
            \includegraphics[scale=1]{fig61.1}\hspace{.5cm}&
            \includegraphics[scale=1]{fig61.2}\\
            {}\\
            $(i)$ & $(ii)$\\
            {}\\
            \includegraphics[scale=1]{fig61.5}&
            \includegraphics[scale=1]{fig61.6}
        \end{tabular}&
        \begin{tabular}{cc}
            \includegraphics[scale=1]{fig61.3}\hspace{.5cm}&
            \includegraphics[scale=1]{fig61.4}\\
            {}\\
            $(iii)$ & $(iv)$\\
            {}\\
            \includegraphics[scale=1]{fig61.7}&
            \includegraphics[scale=1]{fig61.8}
        \end{tabular}\\
         $(v)$ & $(vi)$\\
		%\end{tabular}
    \end{tabular}}
\caption{Configurations $(i)$--$(vi)$ from Lemma~\ref{lemma6} are 14-reducible.}
\label{slika6}
\end{figure}

Now we show that all vertices and faces finish with nonnegative charge, which
is a contradiction.
By $(i)$, we have $\delta(G)\ge 3$.  Thus, we must verify that each $5^-$-face
receives sufficient charge and that no 4-vertex gives away too much charge; note
that $4$-vertices give charge only to faces.

If $\l(f)=3$, then by $(iii)$ each incident vertex is a 4-vertex, so
$\mu^*(f)=-3+3(1)=0$.  If $\l(f)=4$, then by $(iv)$ each incident vertex is a
4-vertex, so $\mu^*(f)=-2+4(1/2)=0$.  If $\l(f)=5$, then (since $G$ has no
adjacent 3-vertices by $(ii)$), $f$ has at least three incident 4-vertices, so
$\mu^*(f)\ge -1 + 3(1/3)=0$.

If $d(v)=3$, then $\mu^*(v)=\mu(v)=0$.  If $d(v)=4$ and $v$ is incident to a
triangle, then by $(v)$ and $(vi)$ vertex $v$ is also incident to three
$5^+$-faces, so $\mu^*(v)\ge 2-1-3(1/3)=0$.  If $d(v)=4$ and $v$ is not
incident to a triangle, then $\mu^*(v)\ge 2-4(1/2)=0$.

Thus, each vertex and face finishes with nonnegative charge.  This contradicts
the fact that the sum of the initial charges was negative.  This contradiction
completes the proof.
\end{proof}
%\bigskip


\begin{thebibliography}{99}

\bibitem{Agnarsson}
G.~Agnarsson, M.~M.~Halld\'orsson,
\emph{Coloring powers of planar graphs},
SIAM J. Discrete Math. {\bf 16} (2003), 651--662.

\bibitem{borodin1}
O.~V. Borodin,
\emph{On the total coloring of planar graphs},
J. Reine Angew. Math. {\bf 394} (1989), 180--185.

\bibitem{borodin}
O.~V.~Borodin, H.~J.~Broersma, A.~Glebov, and J.~van den Heuvel,
\emph{Minimal degrees and chromatic numbers of squares of planar graphs(in
Russian)}, Diskretn. Anal. Issled. Oper. Ser. 18, no. {\bf 4} (2001), 9--33.

\bibitem{borodin2}
O.~V.~Borodin and A.~O.~Ivanova,
\emph{List 2-distance {$(\Delta+2)$}-coloring of planar graphs with girth six
and {$\Delta\ge24$}},
Sibirsk. Mat. Zh. {\bf 50(6)} (2009), 1216--1224.

\bibitem{borodin3}
O.~V.~Borodin, and A.~O.~Ivanova, 
\emph{2-distance {$(\Delta+2)$}-coloring of planar graphs with girth six and 
{$\Delta\ge18$}}, 
Discrete Math. {\bf 309} (2009), 6496--6502. 

\bibitem{cranston}
D.~W. Cranston, S.-J. Kim,
\emph{List-coloring the square of a subcubic graph},
J. Graph Theory {\bf 57} (2008), 65--87.

\bibitem{cranston1}
D.~W. Cranston, S.-J. Kim, and G. Yu,
\emph{Injective Colorings of Graphs with Low Average Degree},
Algorithmica {\bf 60(3)} (2011), 553--568.

\bibitem{diestel}
R.~Diestel,
\emph{Graph theory},
Springer-Verlag, New York, 1997.

\bibitem{skreko1}
Z.~Dvo\v r\'ak, D.~Kr\'al', P.~Nejedl\'y, R.~\v Skrekovski ,
\emph{Coloring squares of planar graphs with no short cycles},
Discrete Appl. Math. {\bf 157} (2009), 2634--2645.

\bibitem{skreko}
Z.~Dvo\v r\'ak, R.~\v Skrekovski, M.~Tancer,
\emph{List-colouring squares of sparse subcubic graphs},
SIAM J. Discrete Math. {\bf 22(1)} (2008), 139--159.

\bibitem{ERT}
P.~Erd\H{o}s, A.~Rubin, and H.~Taylor, \emph{Choosability in graphs}, Congr.
Num. {\bf 26} (1979), 125--137.

\bibitem{havet}
F.~Havet,
\emph{Choosability of the square of planar subcubic graphs with large girth},
Discrete Math. {\bf 309} (2009), 3553--3563.

\bibitem{Havet1}
F.~Havet, J.~van den Heuvel, C.~McDiarmid, B.~Reed,
\emph{List colouring squares of planar graphs},
INRIA research report.
\url{http://hal.inria.fr/docs/00/30/33/03/PDF/RR-6586.pdf}

\bibitem{Heuvel}
J.~van den Heuvel, S.~McGuinness,
\emph{Coloring the square of a planar graph},
J. Graph Theory {\bf 42} (2003), 110--124.

\bibitem{Hetherington}
T.~J.~Hetherington, D.~R.~Woodall,
\emph{List-colouring the square of a $K_4$-minor free graph},
Discrete Math. {\bf 308} (2008), 4037--4043.

\bibitem{Jonas}
T.~K.~Jonas,
\emph{Graph coloring analogues with a condition at distance two:
$L(2,1)$-labelings and list $\lambda$-labelings},
Ph.D. Thesis, University of South Carolina, (1993).

\bibitem{kostochka}
A.V.~Kostochka, D.R. Woodall,
\emph{Choosability conjectures and multicircuits},
Discrete Math. {\bf 240(1--3)} (2001), 123--143.

\bibitem{Lih}
 K.-W.~Lih, W.~F.~Wang, X.~Zhu,
\emph{Coloring the square of a $K_4$-minor free graph},
Discrete Math. {\bf 269} (2003), 303--309.

\bibitem{Molloy}
M.~Molloy, M.~R.~Salavatipour,
\emph{A bound on the chromatic number of the square of a planar graph},
J. Combin. Theory Ser. B {\bf 94} (2005), 189--213.

\bibitem{montassier}
M.~Montassier, A.~Raspaud,
\emph{A note on 2-facial coloring of plane graphs},
Inf. Process. Lett. {\bf 98(6)} (2006), 211--266.

\bibitem{thomassen}
C.~Thomassen,
\emph{The square of a planar cubic graph is 7-colorable},
manuscript, 2006.

\bibitem{vizing}
V.~G.~Vizing,
\emph{Vertex colorings with given colors} (in Russian),
Metody Diskret. Analiz {\bf 29} (1976), 3--10.

\bibitem{wegner}
G.~Wegner,
\emph{Graphs with given diameter and a coloring problem},
Technical Report, University of Dortmund, 1977.

\bibitem{wernicke}
P.~Wernicke,
\emph{\"Uber den kartographischen Vierfarbensatz},
Math. Ann. {\bf 58} (1904), 413--426.

\bibitem{Wong}
S.~A.~Wong,
\emph{Colouring graphs with respect to distance},
M.Sc. Thesis, Department of Combinatorics and Optimization, University of
Waterloo, (1996).

\end{thebibliography}
\end{document}